\patchcmd{\ttlh@hang}{\parindent\z@}{\parindent\z@\leavevmode}{}{}
\patchcmd{\ttlh@hang}{\noindent}{}{}{}
\newcommand\numberthis{\addtocounter{equation}{1}\tag{\theequation}}
\newtheorem{theorem}{Theorem}[section]
\newtheorem{lemma}[theorem]{Lemma}
\newtheorem{proposition}[theorem]{Proposition}
\newtheorem{corollary}[theorem]{Corollary}
\theoremstyle{definition}
\newtheorem{definition}[theorem]{Definition}
\newtheorem{example}[theorem]{Example}
\theoremstyle{remark}
\numberwithin{equation}{section}
\def\XXint#1#2#3{{\setbox0=\hbox{$#1{#2#3}{\int}$ }
\vcenter{\hbox{$#2#3$ }}\kern-.6\wd}}
\newcommand{\T}{\mathbb{T}}
\newcommand{\Z}{\mathbb{Z}}
\newcommand{\N}{\mathbb{N}}
\newcommand{\R}{\mathbb{R}}
\newcommand{\C}{\mathbb{C}}
\newcommand{\Q}{\mathbb{Q}}
\DeclareMathOperator{\ad}{ad}
\DeclareMathOperator{\dist}{dist}
\DeclareMathOperator{\Span}{span}
\DeclareMathOperator{\vol}{vol}
\DeclareMathOperator{\spn}{span}
\DeclareMathOperator{\clspn}{\overline{\spn}}
\newcommand{\Rspan}{\mathbb{R}-\Span}
\newcommand{\Hpi}{\mathcal{H}_{\pi}}
\title[Coherent systems over approximate lattices  in amenable groups]{Coherent systems over approximate lattices in amenable groups}
\subjclass[2020]{22D25, 22E27, 42C30, 42C40}
\keywords{Approximate lattice, complete systems, density condition, discrete series, frame.}
\author{Ulrik Enstad}
\address{Department of Mathematics,
University of Oslo,
Moltke Moes vei 35,
0851 Oslo.}
\email{ubenstad@math.uio.no}
\author{Jordy Timo van Velthoven}
\address{Faculty of Mathematics,
University of Vienna,
Oskar-Morgenstern-Platz 1,
1090 Vienna, Austria}
\email{jordy-timo.van-velthoven@univie.ac.at}
\begin{document}

\maketitle

\begin{abstract}
Let $G$ be a second-countable amenable group with a uniform $k$-approximate lattice $\Lambda$. For a projective discrete series representation $(\pi, \Hpi)$ of $G$ of formal degree $d_{\pi} > 0$, we show that $D^-(\Lambda) \geq d_{\pi} / k$ is necessary for the coherent system $\pi(\Lambda) g$ to be complete in $\Hpi$. In addition, we show that if $\pi(\Lambda^2) g$ is minimal, then $D^+ (\Lambda^2) \leq d_{\pi} k$. Both necessary conditions recover sharp density theorems for uniform lattices and are new even for Gabor systems in $L^2 (\mathbb{R})$. 
As an application of the approach, we also obtain necessary density conditions for coherent frames and Riesz sequences associated to general discrete sets. All results are valid for amenable unimodular groups of possibly exponential growth.
\end{abstract}

\section{Introduction}
Let $(\pi, \Hpi)$ be an irreducible, square-integrable projective unitary representation of a second-countable amenable unimodular group $G$ with Haar measure $\mu$. For a discrete  $\Lambda \subseteq G$ and  $g \in \Hpi$, this paper concerns the relationship between the completeness of a coherent system
\begin{align} \label{eq:coherent_system}
\pi(\Lambda) g = (\pi(\lambda) g )_{ \lambda \in \Lambda }
\end{align}
in $\Hpi$, i.e.,\ the property that $\clspn \pi(\Lambda)g = \Hpi$, and the associated lower and upper Beurling densities of $\Lambda$:
\begin{align*}
    D^-(\Lambda) = \liminf_{n \to \infty} \inf_{x \in G} \frac{|\Lambda \cap xK_n|}{\mu(K_n)}, && D^+(\Lambda) = \limsup_{n \to \infty} \sup_{x \in G} \frac{|\Lambda \cap xK_n|}{\mu(K_n)} ,
\end{align*}
where $(K_n)_{n \in \N}$ is any strong Følner sequence (also called a van Hove sequence) in $G$, cf.\ \Cref{sec:beurling}. In compactly generated groups of polynomial growth, a strong Følner sequence can be obtained from sequences of balls induced by a periodic metric \cite{breuillard2014geometry}, e.g.,\ a word metric, a left-invariant Riemannian or a Carnot--Carath\'eodory metric in the case of connected Lie groups.
Beyond polynomial growth, a sequence of balls induced by a periodic metric does not need to be a F\o lner sequence, e.g.,\ the full sequence of balls $(B_r(x))_{r > 0}$ is not Følner when $G$ has exponential growth. On the other hand, any amenable group always admits a strong Følner sequence. Indeed, this is one of many equivalent characterizations of amenability.

A uniform lattice $\Lambda$ in $G$, i.e.,\ a discrete, co-compact subgroup of $G$, has uniform Beurling density $D(\Lambda) = D^-(\Lambda) = D^+(\Lambda) = 1/\vol(G/\Lambda)$, where $\vol(G/\Lambda)$ denotes the covolume of $\Lambda$. For uniform lattices, the completeness properties of coherent systems \eqref{eq:coherent_system} are fully understood. The density theorem for lattice orbits \cite{bekka2004square,romero2022density, heil2007history} asserts that if $\pi(\Lambda) g$ is complete in $\Hpi$ for some $g \in \Hpi$, then
\begin{align} \label{eq:lattice_density}
D(\Lambda) \geq d_\pi ,
\end{align}
where $d_{\pi} > 0$ denotes the formal degree of $\pi$. For classes of groups $G$, e.g., nilpotent and exponential solvable Lie groups, converse density theorems also hold, in the sense that
for any lattice $\Lambda \leq G$ satisfying  \eqref{eq:lattice_density}, there exists $g \in \Hpi$ such that $\pi(\Lambda) g$ is complete in $\Hpi$. See, e.g., 
\cite{bekka2004square, romero2022density, enstad2021density, enstad2022sufficient} for various density theorems.

In comparison to the case of uniform lattices, the completeness of coherent systems $\pi(\Lambda) g$ associated to general discrete sets $\Lambda \subseteq G$ behaves dramatically different. The showcase demonstrating this is given by so-called \emph{Gabor systems} in $L^2 (\mathbb{R})$, which are the coherent systems $\pi(\Lambda)g$ arising from the (projective) Schrödinger representation of $G = \R^2$ on $L^2(\R)$, given by 
\begin{align} \label{eq:gabor}
\pi(x,\xi)g = e^{2\pi i \xi \cdot}g(\cdot-x), \quad g \in L^2(\R) .
\end{align}
With the usual normalization of the Lebesgue measure on $\R^2$, the formal degree of $\pi$ is $d_{\pi} = 1$.

The completeness of a Gabor system $\pi(\Lambda) g$ with $g(t) = e^{-\pi t^2}$ being the Gaussian is relatively well-understood. In \cite{belov2020upper, ascensi2009phase}, it is shown that complete Gaussian Gabor systems exist with $\Lambda$ being a so-called \emph{regularly distributed set} of lower Beurling density $D^- (\Lambda) < 1$, and that for general sets $\Lambda$ the lower density could be $0$, but that $D^+ (\Lambda) > 1/(3\pi)$ is still necessary for completeness of $\pi(\Lambda) g$. However, for a complete Gabor system $\pi(\Lambda) g$ with general $g \in L^2 (\mathbb{R})$, one could have $D^- (\Lambda) = 0$ and $D^+ (\Lambda) < \varepsilon$ for arbitrary $\varepsilon > 0$ and $\Lambda$ being a subset of a lattice in $\R^2$; see \cite{Wa04}. Moreover, as shown in \cite{romero2004complete}, there exist complete Gabor systems $\pi(\Lambda) g$
with upper density $D^+ (\Lambda) = 0$.
Thus, for a general discrete set $\Lambda \subseteq \mathbb{R}^2$ and $g \in L^2 (\mathbb{R})$, there are no necessary density conditions for $\pi(\Lambda) g$ to be complete in $L^2 (\mathbb{R})$.

The discussion so far leads to a natural question: While one cannot expect necessary density conditions for complete coherent systems over arbitrary point sets, is there a class of ``regular'' point sets that contains uniform lattices for which such conditions hold? The purpose of the present paper is to consider this question for \emph{uniform approximate lattices} as introduced in \cite{bjorklund2018approximate}. In contrast to a genuine uniform lattice, a uniform approximate lattice need not be a subgroup, but only a $k$-approximate group in the sense of \cite{tao2008product, breuillard2012structure}, that is, a symmetric set $\Lambda$ containing the identity such that $\Lambda^2 \subseteq F \Lambda$ for some finite set $F \subseteq G$ of cardinality $|F| \leq k$. While a uniform $1$-approximate lattice is exactly a uniform lattice, uniform $k$-approximate lattices for $k \geq 2$ are much more general and include, among others, Meyer's model sets arising from cut-and-project schemes \cite{meyer1972algebraic,bjorklund2018spherical} and Delone sets arising from symbolic substitutions; see \cite{baake2013aperiodic} and the references therein. 

The main theorem of the present paper provides an extension of the density theorem for lattice orbits to coherent systems over uniform approximate lattices. Its statement is as follows. 

\begin{theorem} \label{thm:main1}
Let $G$ be a second-countable amenable unimodular group with a uniform $k$-approximate lattice $\Lambda \subseteq G$.
Let $(\pi, \Hpi)$ be an irreducible,  square-integrable projective representation of $G$ of formal degree $d_{\pi} > 0$. If there exists $g \in \Hpi$ such that $\pi(\Lambda) g$ is complete in $\Hpi$, then 
\begin{align} \label{eq:density_approx}
    D^- (\Lambda) \geq d_{\pi} / k,
\end{align}
where $D^- (\Lambda)$ denotes the lower Beurling density of $\Lambda$ in $G$. 
\end{theorem}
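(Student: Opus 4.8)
My plan is to combine the orthogonality relations of $\pi$ with a F\o lner‑averaging and trace‑comparison argument of Landau type, where the approximate‑group structure of $\Lambda$ provides a substitute for the (generally unavailable) homogeneous approximation property. Normalize $\|g\|=1$, fix a van Hove sequence $(K_n)_{n\in\N}$, and set $P_g=\langle\cdot,g\rangle g$; by the orthogonality relations for the square‑integrable representation $\pi$ one has $d_\pi\int_G\pi(x)P_g\pi(x)^*\,d\mu(x)=I$ in the weak operator topology. For $x\in G$ and $n\in\N$ I will work with the averaged operators $S_n^{(x)}:=d_\pi\int_{xK_n}\pi(y)P_g\pi(y)^*\,d\mu(y)$. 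Since the integrand is a positive rank‑one operator of unit trace and $xK_n$ has finite Haar measure, each $S_n^{(x)}$ is positive and trace class with $\operatorname{tr}\bigl(S_n^{(x)}\bigr)=d_\pi\mu(K_n)$, and $0\le S_n^{(x)}\le I$ because $\int_{xK_n}\le\int_G$ for positive integrands. For an orthogonal projection $Q$ the elementary identities $\operatorname{tr}\bigl(QS_n^{(x)}Q\bigr)\le\operatorname{rank}(Q)$ and $\operatorname{tr}\bigl((I-Q)S_n^{(x)}(I-Q)\bigr)=d_\pi\int_{xK_n}\dist\bigl(\pi(y)g,\operatorname{ran}Q\bigr)^2\,d\mu(y)$ then give
\[
\operatorname{rank}(Q)\ \ge\ d_\pi\mu(K_n)\ -\ d_\pi\int_{xK_n}\dist\bigl(\pi(y)g,\operatorname{ran}Q\bigr)^2\,d\mu(y).
\]
The goal is therefore to construct, for each $\veps>0$, projections $Q_n^{(x)}$ of rank at most $k\,\lvert\Lambda\cap xK_nC\rvert$, with $C\subseteq G$ relatively compact and independent of $n$ and $x$, whose ranges $2\veps$‑approximate every $\pi(y)g$ with $y\in xK_n$.

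The construction of these projections is the step I expect to be the main obstacle, and it is where both the relative denseness and the $k$‑approximate‑group property of $\Lambda$ enter. Since $\Lambda$ is relatively dense I fix a compact $W_0\ni e$ with $G=\Lambda W_0$. By strong continuity the set $\{\pi(w)g:w\in W_0\}$ is compact in $\Hpi$; covering it by finitely many $\veps$‑balls and using completeness of $\pi(\Lambda)g$ at the centres yields a \emph{single} finite set $E^*\subseteq\Lambda$ with $\dist\bigl(\pi(w)g,\spn\pi(E^*)g\bigr)<2\veps$ for all $w\in W_0$. Given $y\in xK_n$, I write $y=\lambda w$ with $\lambda\in\Lambda\cap xK_nW_0^{-1}$ and $w\in W_0$; applying the unitary $\pi(\lambda)^{-1}$ and absorbing the unimodular cocycle scalars gives $\dist\bigl(\pi(y)g,\spn\{\pi(\lambda\eta)g:\eta\in E^*\}\bigr)<2\veps$. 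Now I invoke the approximate‑group property: since $\Lambda$ is symmetric, $\Lambda^2\subseteq F\Lambda$ is equivalent to $\Lambda^2\subseteq\Lambda\tilde F$ with $\tilde F:=F^{-1}$, $\lvert\tilde F\rvert\le k$, so each $\lambda\eta\in\Lambda^2$ factors as $\lambda\eta=\lambda'f$ with $\lambda'\in\Lambda$ and $f\in\tilde F$, whence $\pi(\lambda\eta)g$ is a unimodular multiple of $\pi(\lambda')\bigl(\pi(f)g\bigr)$ with $\lambda'\in\Lambda\cap xK_nC$ for the fixed relatively compact set $C:=W_0^{-1}E^*\tilde F^{-1}$. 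Hence the projection $Q_n^{(x)}$ onto $\spn\{\pi(\lambda')\pi(f)g:\lambda'\in\Lambda\cap xK_nC,\ f\in\tilde F\}$ has range $2\veps$‑approximating every $\pi(y)g$ with $y\in xK_n$, while $\operatorname{rank}\bigl(Q_n^{(x)}\bigr)\le k\,\lvert\Lambda\cap xK_nC\rvert$ (there are at most $\lvert\tilde F\rvert\le k$ spanning vectors per point of $\Lambda\cap xK_nC$).

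Substituting this into the trace inequality produces $k\,\lvert\Lambda\cap xK_nC\rvert\ge d_\pi\mu(K_n)(1-4\veps^2)$ for all $x\in G$ and all $n$. Since $(K_nC)_n$ is again a strong F\o lner sequence with $\mu(K_nC)/\mu(K_n)\to 1$, dividing by $\mu(K_nC)$, passing to $\liminf_n\inf_{x}$, and finally letting $\veps\downarrow0$ yields $D^-(\Lambda)\ge d_\pi/k$, using that the lower Beurling density is independent of the chosen van Hove sequence (cf.\ \Cref{sec:beurling}). I would stress that the argument uses completeness of $\pi(\Lambda)g$ only through the existence of the single finite set $E^*$, so no Bessel or frame hypothesis on $\pi(\Lambda)g$ is needed, and that the factor $k$ enters exactly once — in the re‑expansion $\pi(\lambda\eta)g\in\spn\{\pi(\lambda')\pi(f)g\}_{f\in\tilde F}$ — so that for a genuine uniform lattice ($k=1$) one recovers the classical bound $D^-(\Lambda)=D(\Lambda)\ge d_\pi$.
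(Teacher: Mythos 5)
Your proposal is correct and follows essentially the same route as the paper's own proof: your trace inequality for $S_n^{(x)}$ is exactly the orthogonality-relations/projection computation of \Cref{thm:HAP_density}, and your construction of the single finite set $E^*$ followed by translation into $\Lambda^2 \subseteq \Lambda\tilde F$ reproduces \Cref{prop:squared_hap} together with the subadditivity step of \Cref{thm:density_approx}, merged into one rank count $k\,\lvert\Lambda\cap xK_nC\rvert$ instead of passing through $D^-(\Lambda^2)\le kD^-(\Lambda)$. The only cosmetic differences are that you enlarge the index window to $xK_nC$ where the paper shrinks the integration domain to $x(K_n\setminus K_n^cK)$, and that you should replace $C=W_0^{-1}E^*\tilde F^{-1}$ by a compact symmetric unit neighborhood containing it (which only increases the left-hand side) before invoking the strong F\o lner property of $(K_nC)_n$ and $\mu(K_nC)/\mu(K_n)\to 1$ --- a trivial repair.
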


 \Cref{thm:main1} is new even for the special case of Gabor systems \eqref{eq:gabor} in $L^2 (\mathbb{R})$.
  Moreover, in contrast to \cite{bekka2004square, romero2022density, enstad2021density, enstad2022sufficient}, \Cref{thm:main1} is also applicable to groups admitting a uniform approximate lattice but no uniform lattice. 

\Cref{thm:main1} is optimal in the sense that the density $D^-(\Lambda)$ could be arbitrary small while $d_{\pi} = 1$ (see \Cref{prop:optimality}), and hence one cannot remove the denominator $k \geq 1$ in the right-hand side of \eqref{eq:density_approx}. In addition, \Cref{thm:main1} recovers  the density theorems \cite{bekka2004square, romero2022density} for genuine uniform lattices which corresponds to the case $k=1$.

For proving \Cref{thm:main1} in the case of a lattice $\Lambda \leq G$, it suffices to obtain a necessary density condition under the stronger assumption that $\pi(\Lambda) g$ is a so-called \emph{frame} (see \Cref{sec:frames_riesz}) as the existence of complete systems and frames are equivalent for lattice index sets, see, e.g., \cite{romero2022density, bekka2004square, gabardo2003frame}. In turn, the frame property of a lattice orbit allows for a relatively simple direct proof of the necessary density condition. However, the correspondence between the existence of complete systems and frames does not extend to general approximate lattices. The present paper uses therefore a different approach for obtaining \Cref{thm:main1}, namely it uses necessary density conditions for systems $\pi(\Lambda) g$ satisfying the so-called \emph{homogeneous approximation property (HAP)} introduced in \cite{RaSt95} (see \Cref{def:HAP}). The HAP is a stronger condition than merely completeness and does, in contrast to the completeness property, allow for necessary density conditions for general point sets. This is shown by the next \Cref{thm:HAP_intro}; see also \cite{RaSt95, ahn2018density} for related results in the case of Gabor systems.

\begin{theorem} \label{thm:HAP_intro}
Let $G$ be a second-countable amenable unimodular group with an irreducible, square-integrable projective representation $(\pi, \Hpi)$ of formal degree $d_{\pi} > 0$. Let $\Lambda \subseteq G$ be locally finite and $g \in \Hpi$. 

If $\pi(\Lambda) g$ satisfies the homogeneous approximation property (HAP), then $D^- (\Lambda) \geq d_{\pi}$. 
\end{theorem}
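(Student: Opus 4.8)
The plan is to exploit the homogeneous approximation property to compare, on each large Følner set $K_n$, the dimension of a finitely-generated piece of the coherent system $\pi(\Lambda)g$ with the trace of a suitable projection. Recall that a square-integrable $\pi$ gives a reproducing formula: for $g$ with $\|g\|=1$, the operator $P = d_\pi \int_G \langle \cdot, \pi(x)g\rangle \pi(x)g\, d\mu(x)$ is the identity on $\Hpi$, and more generally for a measurable $E \subseteq G$ the operator $P_E = d_\pi \int_E \langle \cdot, \pi(x)g\rangle \pi(x)g\, d\mu(x)$ is positive with $\operatorname{tr}(P_E) = d_\pi \mu(E)$ (this is the standard localization operator computation; I would record it as a lemma or cite it from the square-integrability machinery assumed in the paper). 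The HAP says, roughly, that for every $\varepsilon > 0$ and every compact $Q \subseteq G$ there is a compact $F \subseteq G$ such that for all $x \in G$, every $\pi(x)h$ with $h$ in the unit ball of $\operatorname{span}\{\pi(q)g : q \in Q\}$ (or of $\Hpi$, depending on the exact form of \Cref{def:HAP}) is approximated to within $\varepsilon$ by the orthogonal projection onto $\operatorname{span}\{\pi(\lambda)g : \lambda \in \Lambda \cap xF\}$.

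The key steps, in order, are as follows. First I would fix $\varepsilon > 0$, choose via square-integrability a compact $Q$ with $\mu(Q)$ large and $\operatorname{tr}(P_Q) = d_\pi\mu(Q) \geq (1-\varepsilon)d_\pi\mu(Q)$ capturing almost all of a chosen finite-rank piece, and then feed $Q$ into the HAP to obtain the compact set $F = F(\varepsilon, Q)$. Second, for a Følner set $K_n$ I would consider the finite index set $\Lambda \cap K_n F$ and the finite-dimensional space $V_n = \operatorname{span}\{\pi(\lambda)g : \lambda \in \Lambda \cap K_n F\}$, whose dimension is at most $|\Lambda \cap K_n F|$. Third, using the HAP I would show that the orthogonal projection $\Pi_{V_n}$ onto $V_n$ is close in an averaged sense to $P_{K_n}$: concretely, $\operatorname{tr}(\Pi_{V_n} P_{K_n} \Pi_{V_n})$ is within $O(\varepsilon \mu(K_n))$ of $\operatorname{tr}(P_{K_n}) = d_\pi \mu(K_n)$, because for the ``bulk'' of $x \in K_n$ the vector $\pi(x)g$ (suitably truncated by $Q$) is $\varepsilon$-close to $V_n$, and the boundary effects are controlled by the strong Følner property $|\partial_F K_n| / \mu(K_n) \to 0$. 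Fourth, since $\operatorname{tr}(\Pi_{V_n} P_{K_n}\Pi_{V_n}) \leq \operatorname{tr}(\Pi_{V_n}) = \dim V_n \leq |\Lambda \cap K_n F|$, I would obtain
\begin{align*}
(1 - C\varepsilon) d_\pi \mu(K_n) \leq |\Lambda \cap K_n F|
\end{align*}
for $n$ large. Fifth, dividing by $\mu(K_n)$, using that $K_n F$ is again a strong Følner sequence (so $\mu(K_n F)/\mu(K_n) \to 1$ and the density computed along $(K_n F)_n$ equals $D^-(\Lambda)$), taking $\liminf_n$ and then $\varepsilon \to 0$ yields $D^-(\Lambda) \geq d_\pi$.

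The main obstacle I anticipate is the rigorous control in the third step: passing from the pointwise statement of the HAP (each individual $\pi(x)g$ is close to the span of nearby atoms) to the operator/trace inequality $\operatorname{tr}(\Pi_{V_n}P_{K_n}\Pi_{V_n}) \geq (1-C\varepsilon)d_\pi\mu(K_n)$. One has to interchange the integral defining $P_{K_n}$ with the projection, handle that the HAP gives approximation of $\pi(x)g$ only after truncating to the $Q$-generated subspace (so one needs $\|P_Q \pi(x)g - \pi(x)g\|$ small for most $x$, which is where the choice of $Q$ with large measure enters, via a further averaging/Fubini argument using left-invariance of $\mu$), and carefully absorb the set of ``bad'' $x \in K_n$ near the boundary into the Følner error. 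A secondary technical point is ensuring local finiteness of $\Lambda$ is genuinely used — it guarantees $|\Lambda \cap K_n F| < \infty$ so that $V_n$ is finite-dimensional and $\dim V_n \leq |\Lambda \cap K_n F|$ makes sense — and that the same estimate is uniform in the base point $x$ appearing in the infimum defining $D^-$, which forces the Følner/boundary estimates to be taken uniformly over translates, matching the uniform structure already present in the definition of $D^-(\Lambda)$.
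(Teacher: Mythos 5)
Your proposal is correct and is essentially the paper's own argument for \Cref{thm:HAP_density}: in both cases the heart of the matter is that the orthogonality relations \eqref{eq:ortho} give $d_\pi\int_G\|\Pi\,\pi(y)\eta\|^2\,d\mu(y)=\dim V$ for the orthogonal projection $\Pi$ onto the finite-dimensional local span $V$ (your trace identity for the localization operator is the same computation), while the HAP forces $\|\Pi\,\pi(y)\eta\|^2\geq 1-\varepsilon$ on the bulk of a translated F\o lner set, and the strong F\o lner property absorbs the boundary. The one wrinkle you flag as your ``main obstacle'' dissolves on consulting \Cref{def:HAP}: the HAP is stated for each fixed $f\in\Hpi$ directly (no truncation to a $Q$-generated subspace is needed), and the paper sidesteps your step-five bookkeeping by shrinking the integration domain to $x(K_n\setminus K_n^cK)$ so that the index set stays $\Lambda\cap xK_n$, rather than enlarging it to $\Lambda\cap xK_nF$.
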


In \Cref{prop:squared_hap}, we show that for a relatively dense set $\Lambda \subseteq G$ a complete coherent system associated to the $2$-fold product $\Lambda^2$, i.e., $\pi (\Lambda^2) g$ for some $g \in \Hpi$, satisfies the HAP. In combination, this allows a proof of \Cref{thm:main1}; see the proof of \Cref{thm:density_approx} for details. Our proof method for \Cref{thm:HAP_intro} is inspired by the use of the HAP in the lattice density theorem for Gabor systems in $L^2(\mathbb{R})$ (see \cite{ahn2018density, RaSt95}) as pointed out throughout the text. 

In addition to the above results on complete systems, the present paper also provides complementary results on (uniformly) minimal systems (see Sections \ref{sec:uniformly_minimal} and \ref{sec:minimal} for definitions).

\begin{theorem} \label{thm:minimal_intro}
With notation as in \Cref{thm:HAP_intro},
\begin{enumerate}[(i)]
    \item If $\pi(\Lambda) g$ is uniformly minimal, then $D^+ (\Lambda) \leq d_{\pi}$. 
    \item If $\Lambda \subseteq G$ is a uniform $k$-approximate lattice and $\pi(\Lambda^2) g$ is minimal, then  $D^+ (\Lambda^2) \leq d_{\pi} k$.
    \end{enumerate}
\end{theorem}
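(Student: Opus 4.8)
The plan is to treat the two statements separately, with (ii) building on (i) via an argument dual to the one that derives \Cref{thm:main1} from \Cref{thm:HAP_intro}. For part (i), I would first recall that uniform minimality of the coherent system $\pi(\Lambda)g$ means there is $c > 0$ such that $\dist\bigl(\pi(\lambda)g,\, \clspn\{\pi(\lambda')g : \lambda' \in \Lambda \setminus \{\lambda\}\}\bigr) \geq c \|g\|$ for all $\lambda \in \Lambda$; equivalently, there is a bounded biorthogonal system $(h_\lambda)_{\lambda \in \Lambda}$ with $\langle \pi(\lambda)g, h_{\lambda'}\rangle = \delta_{\lambda,\lambda'}$ and $\sup_\lambda \|h_\lambda\| < \infty$. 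The key quantity to control is the localized trace: for a Følner set $K_n$, one compares $\sum_{\lambda \in \Lambda \cap K_n} \langle \pi(\lambda)g, h_\lambda \rangle = |\Lambda \cap K_n|$ against the trace of a suitable ``localized'' rank-one-sum operator. Concretely, I would form $S_n = \sum_{\lambda \in \Lambda \cap K_n} \pi(\lambda)g \otimes h_\lambda$ and estimate $\operatorname{tr}(P_n S_n)$, where $P_n$ is a projection adapted to the Følner set at scale $n$, using the reproducing formula from square-integrability (the orthogonality relations for $\pi$ with constant $d_\pi$) to convert operator traces into integrals over $G$. The ratio of these traces to $\mu(K_n)$ should converge, on the one hand, to $D^+(\Lambda)$-type quantities and, on the other, be bounded above by $d_\pi$ because the biorthogonal functions are bounded and the representation is irreducible; this is exactly the mechanism that makes upper density estimates dual to the lower-density HAP estimates of \Cref{thm:HAP_intro}.

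For part (ii), I would leverage part (i) together with an observation about products of approximate lattices. If $\pi(\Lambda^2)g$ is merely minimal (not necessarily uniformly so), minimality gives, for each $\lambda \in \Lambda^2$, that $\pi(\lambda)g \notin \clspn\{\pi(\lambda')g : \lambda' \in \Lambda^2 \setminus \{\lambda\}\}$, but without a uniform gap. The trick is that $\Lambda$ being a uniform $k$-approximate lattice forces $\Lambda^2$ to be ``covered'' by $k$ translates of $\Lambda$, i.e. $\Lambda^2 \subseteq F\Lambda$ with $|F| \leq k$. I would use this to transfer minimality of $\pi(\Lambda^2)g$ into a uniform minimality statement for the subsystems $\pi(\Lambda)fg$ indexed by the finitely many coset representatives, or more directly, to show that a minimal system indexed by $\Lambda^2$ cannot be ``too dense'': decompose $\Lambda^2 = \bigsqcup_{j} \Lambda_j$ with each $\Lambda_j \subseteq f_j \Lambda$, and on each piece the translated system $\pi(f_j^{-1}\Lambda_j)g'$ (with $g' = \pi(f_j^{-1})^* $-transformed appropriately, using that left translation by a fixed group element is a unitary in the projective setting up to a cocycle phase) inherits enough separation to apply the upper-density bound $d_\pi$ from part (i) at the level of each piece. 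Summing the $k$ contributions yields $D^+(\Lambda^2) \leq k \cdot d_\pi$.

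The main obstacle I anticipate is in part (ii): passing from plain minimality of $\pi(\Lambda^2)g$ to a bound one can actually sum over the $k$ cosets. Minimality alone does not give quantitative biorthogonality constants, so one cannot naively apply part (i) to each piece $\Lambda_j$. I expect the resolution to go through the same route used to prove \Cref{thm:main1} from \Cref{thm:HAP_intro}: there should be a ``squared'' version (analogous to \Cref{prop:squared_hap}) showing that minimality of $\pi(\Lambda^2)g$ implies a uniform-minimality-type property — perhaps uniform minimality of $\pi(\Lambda)g$ itself, or a dual homogeneous approximation property — for the single-fold system, after which part (i) applies directly to $\Lambda$ and the covering bound $\Lambda^2 \subseteq F\Lambda$ gives $D^+(\Lambda^2) \leq |F| \, D^+(\Lambda) \leq k \, d_\pi$. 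The technical care needed is in handling the projective cocycle when translating coherent systems by group elements and in verifying that the relevant ``local trace'' limits along the strong Følner sequence are insensitive to the boundary effects coming from the finite covering set $F$; both of these are the kind of estimates that the strong (van Hove) Følner property is designed to absorb.
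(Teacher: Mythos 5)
Your proposal follows essentially the same route as the paper. Part (i) is proved there exactly by the trace-comparison mechanism you describe: one projects onto the finite-dimensional span $V_{x,n}=\spn\{\pi(\lambda)g:\lambda\in\Lambda\cap xK_n\}$, uses the bounded biorthogonal system to write $\|P\pi(y)\eta\|^2$ as a sum over $\Lambda\cap xK_n$, integrates over the slightly enlarged Følner set $xK_nK$, and converts each term to $d_\pi^{-1}$ via the orthogonality relations, with the tail integrals absorbed by choosing $K$ so that $\int_{G\setminus K}|\langle g,\pi(y)\eta\rangle|^2\,d\mu(y)$ is small. For part (ii), the "squared" lemma you conjecture at the end is precisely how the paper proceeds (\Cref{lem:minimal_uniform}): minimality of $\pi(\Lambda^2)g$ gives a \emph{single} vector $h$ with $\langle\pi(\mu)g,h\rangle=\delta_{\mu,e}$ for $\mu\in\Lambda^2$, and then $(\pi(\lambda)h)_{\lambda\in\Lambda}$ is biorthogonal to $\pi(\Lambda)g$ because $\lambda_2^{-1}\lambda_1\in\Lambda^{-1}\Lambda=\Lambda^2$ (the cocycle phases cancel on the diagonal); this system is automatically bounded since every $\pi(\lambda)h$ has norm $\|h\|$, so $\pi(\Lambda)g$ is uniformly minimal and part (i) plus subadditivity over $\Lambda^2\subseteq F\Lambda$ gives $D^+(\Lambda^2)\leq |F|\,D^+(\Lambda)\leq d_\pi k$. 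Your first idea for (ii) --- applying part (i) separately to the pieces $\Lambda_j\subseteq f_j\Lambda$ --- would indeed fail for the reason you identify (no uniform gap from plain minimality), so the translated-dual-vector construction is the step that closes your proposal.
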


In part (ii) of \Cref{thm:minimal_intro}, if $\Lambda$ is a uniform lattice (so that one can choose $k = 1$), then $D^+ (\Lambda) \leq d_{\pi}$ is necessary for the minimality of $\pi(\Lambda) g$. For arbitrary $\Lambda \subseteq G$, the upper density $D^+ (\Lambda)$ could be arbitrary large whenever $\pi (\Lambda) g$ is merely minimal, cf. \cite[Proposition 1]{ahn2018density}. 

Our proof of \Cref{thm:minimal_intro} closely follows the approach in \cite{ahn2018density} for the case of Gabor systems. For groups $G$ of polynomial growth, part (i) can also be deduced from \cite[Theorem 3.10]{mitkovski2020density}, but it is new for groups with exponential growth.

Lastly, it is worth mentioning that a direct consequence of Theorems \ref{thm:HAP_intro} and \ref{thm:minimal_intro} are the following necessary density conditions for frames and Riesz sequences (see \Cref{sec:frames_riesz}). 

\begin{corollary} \label{cor:frame_intro}
With notation as in \Cref{thm:HAP_intro} and \Cref{thm:minimal_intro},
\begin{enumerate}[(i)]
    \item If $\pi (\Lambda) g$ is a frame for $\Hpi$ and $g \in \mathcal{B}_{\pi}^2$ (see \Cref{eq:b2_vector}), then $D^- (\Lambda) \geq d_{\pi}$. 
    \item If $\pi(\Lambda) g$ is a Riesz sequence in $\Hpi$, then $D^+ (\Lambda) \leq d_{\pi}$. 
\end{enumerate}
\end{corollary}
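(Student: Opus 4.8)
The plan is to deduce both statements from results already established above: I would reduce part~(i) to \Cref{thm:HAP_intro} and part~(ii) to part~(i) of \Cref{thm:minimal_intro}. Concretely, for~(i) I would show that a frame $\pi(\Lambda)g$ with $g \in \mathcal{B}_{\pi}^2$ satisfies the homogeneous approximation property, and then quote \Cref{thm:HAP_intro}; for~(ii) I would show that a Riesz sequence $\pi(\Lambda)g$ is uniformly minimal, and then quote part~(i) of \Cref{thm:minimal_intro}.

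For~(i), let $S$ be the frame operator of $\pi(\Lambda)g$, so that $(S^{-1}\pi(\lambda)g)_{\lambda \in \Lambda}$ is the canonical dual frame and every $f \in \Hpi$ has the expansion $f = \sum_{\lambda \in \Lambda}\langle f, S^{-1}\pi(\lambda)g\rangle\,\pi(\lambda)g$, with $\sum_{\lambda\in\Lambda}|\langle f, S^{-1}\pi(\lambda)g\rangle|^2 \le A^{-1}\|f\|^2$ for a lower frame bound $A>0$. Verifying the HAP for $\pi(\Lambda)g$ reduces, after applying this expansion to $f=\pi(x)g$ and using boundedness of the synthesis operator, to the uniform tail estimate that
\[
\sum_{\lambda\in\Lambda\setminus xK}\bigl|\langle \pi(x)g, S^{-1}\pi(\lambda)g\rangle\bigr|^2
\]
can be made arbitrarily small over all $x \in G$ by enlarging the compact set $K \subseteq G$. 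This is precisely where the hypothesis $g \in \mathcal{B}_{\pi}^2$ is used: membership in $\mathcal{B}_{\pi}^2$ forces enough decay of the matrix coefficients of $\pi$ tested against $g$ that, together with boundedness of $S^{-1}$ and local finiteness of $\Lambda$, the $\ell^2$-mass of the coefficient sequence $(\langle\pi(x)g,S^{-1}\pi(\lambda)g\rangle)_{\lambda\in\Lambda}$ concentrates --- with bounds independent of $x$ --- on the $\lambda$ lying within a fixed bounded distance of $x$. Granting this uniform estimate, $\pi(\Lambda)g$ has the HAP, and \Cref{thm:HAP_intro} gives $D^-(\Lambda)\ge d_{\pi}$.

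For~(ii), recall that a Riesz sequence $(\pi(\lambda)g)_{\lambda\in\Lambda}$ is a Riesz basis for its closed linear span, hence the image of an orthonormal basis under a bounded invertible operator on $\clspn \pi(\Lambda)g$; in particular it is biorthogonal to a uniformly bounded family of functionals, and therefore uniformly minimal, i.e.\ there is $c>0$ with $\dist\bigl(\pi(\lambda)g,\ \clspn \{\pi(\mu)g : \mu\in\Lambda,\ \mu\ne\lambda\}\bigr)\ge c$ for every $\lambda\in\Lambda$ (note that $\|\pi(\lambda)g\|=\|g\|$ is constant along the orbit). Part~(i) of \Cref{thm:minimal_intro} then yields $D^+(\Lambda)\le d_{\pi}$.

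The only substantive step is the uniform tail estimate in~(i). The difficulty is that for a general discrete set $\Lambda$ that is not a subgroup, the frame operator $S$ does not commute with $\pi$, so one cannot reduce the dual frame to a single generating vector and exploit covariance; instead the decay of $\lambda\mapsto\langle\pi(x)g,S^{-1}\pi(\lambda)g\rangle$, and crucially its uniformity in $x\in G$, has to be extracted directly from the norm defining $\mathcal{B}_{\pi}^2$ together with the two frame inequalities. This is where I expect the main work to lie; the Riesz sequence part~(ii) is, by contrast, a routine reduction to uniform minimality.
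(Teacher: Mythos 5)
Part (ii) of your proposal is correct and is exactly the paper's argument: a Riesz sequence has a bounded biorthogonal system (its canonical dual in $\clspn \pi(\Lambda)g$), hence is uniformly minimal, and \Cref{thm:uniformly_minimal} applies. Part (i), however, contains a genuine gap, and you have located it yourself without closing it: the ``uniform tail estimate'' for $\sum_{\lambda\in\Lambda\setminus xK}|\langle \pi(x)f, S^{-1}\pi(\lambda)g\rangle|^2$ is the entire content of the step, and it does not follow in any routine way from $g\in\mathcal{B}_{\pi}^2$ together with the frame inequalities. The coefficients $\langle \pi(x)f, S^{-1}\pi(\lambda)g\rangle$ are \emph{not} covariant in $x$ precisely because $S^{-1}$ does not commute with $\pi$, so knowing that $y\mapsto \sup_{u\in Q}|\langle h,\pi(yu)g\rangle|$ lies in $L^2(G)$ for each fixed $h$ gives tail decay for each fixed $x$ (with $h=S^{-1}\pi(x)f$), but the family $\{S^{-1}\pi(x)f : x\in G\}$ is merely bounded, not relatively compact, and bounded sets do not have uniformly small $L^2$-tails. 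Asserting that the $\ell^2$-mass ``concentrates with bounds independent of $x$'' is therefore an unproved claim, and it is the hard one.

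The paper avoids this problem by proving the HAP for the \emph{canonical dual family} $(h_\lambda)_{\lambda\in\Lambda}=(S^{-1}\pi(\lambda)g)_{\lambda\in\Lambda}$ rather than for $\pi(\Lambda)g$ itself (\Cref{prop:frame_hap}, quoted from Gr\"ochenig); this is exactly why \Cref{def:HAP} and \Cref{thm:HAP_density} are formulated for arbitrary families $(g_\lambda)_{\lambda\in\Lambda}$ and not only for coherent systems. Writing $\pi(x)f=\sum_{\lambda}\langle \pi(x)f,\pi(\lambda)g\rangle h_\lambda$, the relevant coefficients become $\langle \pi(x)f,\pi(\lambda)g\rangle$, which up to a unimodular cocycle factor equal $\langle f,\pi(x^{-1}\lambda)g\rangle$; the covariance makes the tail sum over $\Lambda\setminus xK$ depend only on the single fixed function $y\mapsto\langle f,\pi(y)g\rangle$ sampled on $x^{-1}\Lambda\setminus K$, and the $\mathcal{B}_{\pi}^2$ condition together with the relative separation of $\Lambda$ (forced by the Bessel bound) then gives uniformity in $x$. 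Since the dual family is indexed by the same set $\Lambda$, applying \Cref{thm:HAP_density} to $(h_\lambda)_{\lambda\in\Lambda}$ yields $D^-(\Lambda)\ge d_\pi$ directly. If you want to keep your route, you must either prove the non-covariant tail estimate (which is not known to hold at this level of generality) or switch to the dual system as the paper does.
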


Let us emphasize that all the above results are valid in any second-countable amenable unimodular group, showing that Beurling densities defined in terms of strong Følner sequences give the right notion of density in this generality. In contrast, the previous density theorems for coherent frames and Riesz sequences over irregular point sets \cite{fuehr2017density,mitkovski2020density} assume that the underlying group is compactly generated of polynomial growth, in which case Beurling densities can be expressed in terms of balls coming from a word metric. That Beurling densities in terms of Følner sequences give rise to density theorems for coherent systems was also shown recently in \cite{enstad2022dynamical}. In particular, a generalization of \Cref{cor:frame_intro} applicable to non-amenable unimodular groups was proved in \cite{enstad2022dynamical}, but we give a more elementary proof in the amenable case in the present paper.

The paper is organized as follows. Section \ref{sec:prelim} contains the essential background on approximate lattices and Beurling densities. All density conditions for coherent systems are proven in Section \ref{sec:density_coherent}. 
Section \ref{sec:example} consists of two examples demonstrating the main results.

\section{Approximate lattices and Beurling densities} \label{sec:prelim}

Throughout this section, $G$ denotes a second-countable locally compact group. Left Haar measure on $G$ is denoted by $\mu$. 

\subsection{Approximate lattices}
A subset $\Lambda \subseteq G$ is called \emph{locally finite} if it is closed and discrete, equivalently, $\Lambda \cap K$ is finite for every compact set $K \subseteq G$. 
Furthermore,  $\Lambda$ is said to be \emph{relatively dense} if there exists a compact set $K \subseteq G$ such that $\Lambda K = G$, and it is said to be \emph{uniformly discrete} if there exists an open set $V \subseteq G$ such that $|\Lambda \cap x V| \leq 1$ for all $x \in G$. 
A \emph{Delone set} is a set that is both relatively dense and uniformly discrete. Lastly, $\Lambda$ is said to be of \emph{finite local complexity (FLC)} if $\Lambda^{-1} \Lambda$ is locally finite. 

Following \cite{bjorklund2018approximate}, we define uniform approximate lattices as follows.

\begin{definition}
Let $k \in \mathbb{N}$. A subset $\Lambda \subseteq G$ is called a \emph{$k$-approximate subgroup} of $G$ if the following properties hold:
\begin{enumerate}
    \item[(a1)] The identity $e \in G$ is contained in $\Lambda$;
    \item[(a2)] $\Lambda$ is symmetric, i.e.,\ $\Lambda^{-1} = \Lambda$;
    \item[(a3)] there exists a finite set $F \subseteq G$ of cardinality $|F| \leq k$ such that $\Lambda^2 \subseteq F \Lambda$.
\end{enumerate}
A $k$-approximate subgroup $\Lambda$ is called a \emph{uniform $k$-approximate lattice} if it is also a Delone set. 
A set $\Lambda$ is called simply a \emph{uniform approximate lattice} if it is a uniform $k$-approximate lattice for some $k$. 
\end{definition}

Note that for symmetric $\Lambda$ the condition (a3) is equivalent to the existence of a finite set $F' \subseteq G$ such that $\Lambda^2 \subseteq \Lambda F'$ (choose $F' = F^{-1}$).

We mention the following fact, cf. \cite[Corollary 2.10]{bjorklund2018approximate}.
\begin{lemma}[\cite{bjorklund2018approximate}] \label{lem:relative_dense}
Let $\Lambda$ be a uniform approximate lattice in $G$ and let $\Gamma \subseteq \Lambda$ be a symmetric set which contains the identity and is relatively dense in $G$. Then $\Gamma$ is a uniform approximate lattice.
\end{lemma}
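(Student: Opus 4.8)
The plan is to prove that $\Gamma$ is a uniform approximate lattice by verifying the three axioms (a1)--(a3) together with the Delone property. Axioms (a1) and (a2) are immediate from the hypotheses on $\Gamma$, and relative density is also assumed outright, so the real content is (a3) — that $\Gamma$ is a $k'$-approximate subgroup for some $k'$ — plus uniform discreteness. Uniform discreteness of $\Gamma$ follows for free: since $\Lambda$ is uniformly discrete there is an open $V$ with $|\Lambda \cap xV| \leq 1$ for all $x \in G$, and $\Gamma \subseteq \Lambda$ inherits this bound. So everything reduces to establishing the approximate-subgroup condition (a3) for $\Gamma$.

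First I would record that $\Gamma^2 \subseteq \Lambda^2$, and since $\Lambda$ is a uniform $k$-approximate lattice, the product set $\Lambda^2$ is again relatively dense and uniformly discrete — in fact $\Lambda^2$ is itself a $k^2$-ish approximate set, but more to the point $\Lambda^2$ is a Delone set. The key mechanism is a standard covering/pigeonhole argument: because $\Gamma$ is relatively dense, there is a compact $K$ with $\Gamma K = G$; because $\Gamma^2 \subseteq \Lambda^2$ lies in a uniformly discrete set, the number of elements of $\Gamma^2$ in any fixed-size region is bounded. Concretely, cover the compact set $K' := K K^{-1}$ (or whatever compact set controls $\Gamma^2$ — note $\Gamma^2 \subseteq \Gamma^2$, and one shows $\Gamma^2$ meets $G$ with bounded multiplicity) by finitely many translates of a small open set, and use uniform discreteness to conclude that $\Gamma^2$ can be covered by $N$ left-translates $F\Gamma$ for a finite set $F$. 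The bound $N$ depends only on the Delone constants of $\Lambda$ (equivalently of $\Lambda^2$) and the relative density constant of $\Gamma$, not on $\Gamma$ itself. This is precisely the content of \cite[Corollary 2.10]{bjorklund2018approximate}, so I would either cite it directly or reproduce the short pigeonhole argument.

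In more detail, the covering argument runs as follows. Fix $\gamma \in \Gamma^2$. Since $\Gamma K = G$, write $\gamma = \gamma' k$ with $\gamma' \in \Gamma$ and $k \in K$, so $\gamma' = \gamma k^{-1} \in \Gamma^2 K^{-1} \subseteq \Lambda^2 K^{-1}$. Thus every element of $\Gamma^2$ is of the form $\gamma' k$ with $\gamma' \in \Gamma \cap (\Lambda^2 K^{-1})$ and $k \in K$. Now $\Lambda^2 K^{-1}$ is relatively compact intersected with... more carefully: for a fixed $x$, the set $\Lambda^2 \cap xKK^{-1}$ has cardinality bounded by a constant $C$ depending only on $KK^{-1}$ and the uniform discreteness constant $V$ of $\Lambda$ (cover $xKK^{-1}$ by $C$ translates of $V$). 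Choosing representatives carefully, one extracts a finite set $F \subseteq G$ with $|F| \leq C$ such that $\Gamma^2 \subseteq F\Gamma$. The main obstacle, and the place requiring care, is bookkeeping the dependence of the finite set $F$ on the compact sets involved and making sure the multiplicity bound is genuinely uniform in $x$ — this is exactly where amenability is not needed but the Delone structure of $\Lambda$ (and hence of $\Lambda^2$) is essential. I expect no essential difficulty beyond this, so the cleanest writeup simply invokes \Cref{lem:relative_dense}'s source \cite[Corollary 2.10]{bjorklund2018approximate}; if a self-contained argument is wanted, the pigeonhole covering above supplies it.
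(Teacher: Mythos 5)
The paper itself gives no proof of \Cref{lem:relative_dense}: it simply imports the statement from \cite[Corollary 2.10]{bjorklund2018approximate}. So your fallback of citing that corollary is exactly what the authors do, and your reduction of the problem to axiom (a3) (with (a1), (a2), relative density and uniform discreteness all being immediate) is correct.

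Your self-contained sketch of (a3), however, has a gap at the decisive step. You derive a bound $|\Lambda^2 \cap xKK^{-1}| \leq C$ uniformly in $x$, and then assert that ``choosing representatives carefully'' one extracts a single finite set $F$ with $\Gamma^2 \subseteq F\Gamma$. A multiplicity bound that is uniform in $x$ does not by itself produce one finite set: as $\gamma'$ ranges over the (infinite) set $\Gamma$, each translate $\gamma' K$ contributes at most $C$ elements of $\Gamma^2$, but a priori the resulting ``defect'' elements could form an infinite collection. The point you need, and never quite isolate, is that for $\gamma \in \Gamma^2$ written as $\gamma = \gamma' k$ with $\gamma' \in \Gamma$ and $k \in K$ (using $\Gamma K = G$), the defect $k = (\gamma')^{-1}\gamma$ lies in $\Gamma^{-1}\Gamma^2 \cap K \subseteq \Lambda^3 \cap K$, and this is a single \emph{fixed finite} set: $\Lambda^3 \subseteq F_0^2 \Lambda$ for the finite set $F_0$ witnessing (a3) for $\Lambda$, so $\Lambda^3$ is a finite union of translates of a uniformly discrete set, hence locally finite, hence has finite intersection with the compact set $K$. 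Taking $F' = \Lambda^3 \cap K$ gives $\Gamma^2 \subseteq \Gamma F'$, which by symmetry of $\Gamma$ is equivalent to $\Gamma^2 \subseteq F'^{-1}\Gamma$ (the equivalence the paper records right after the definition of approximate subgroups). Note also that the relevant locally finite set is $\Lambda^3$, not $\Lambda^2$ as in your covering argument; with that substitution and the observation above, your argument closes.
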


Any uniform lattice in $G$, i.e., a discrete, co-compact subgroup, is precisely a uniform $1$-approximate lattice in $G$. By \Cref{lem:relative_dense}, 
also relatively dense symmetric subsets of lattices containing the identity form an approximate lattice. 
Other examples of approximate lattices are so-called model sets and suitable  subsets thereof.

\begin{example}[Model sets] \label{ex:model_set}
A cut-and-project scheme is a triple $(G,H,\Gamma)$ where $G$ and $H$ are locally compact groups and $\Gamma$ is a lattice in $G \times H$ which projects injectively to $G$ and densely to $H$. Given a compact set $W \subseteq H$ of non-void interior, a set of the form
\[ \Lambda = p_G( \Gamma \cap (G \times W)) \]
where $p_G \colon G \times H \to G$ denotes the projection onto $G$ is called a \emph{model set} in $G$. The set $W$ is called the \emph{window} associated to the model set. If $\Gamma$ is a uniform lattice and $W$ is a symmetric, compact neighborhood of the identity of $H$, then $\Lambda$ is a uniform approximate lattice: Clearly $\Lambda$ contains the identity and is symmetric. Denoting by $p_H$ the projection of $G \times H$ onto $H$, we have that $p_H(\Gamma)W = H$ since $p_H(\Gamma)$ is dense in $H$. By compactness there exists a finite subset $F'$ of $\Gamma$ such that $W^2 \subseteq p_H(F')W$. Setting $F = p_G(F')$, it is not hard to see that $\Lambda^2 \subseteq F \Lambda$, hence $\Lambda$ is an approximate lattice. See also \cite[Proposition 2.13]{bjorklund2018approximate}.
\end{example}

In a locally compact abelian group $G$, any uniform approximate lattice $\Lambda$ is a subset of and commensurable to a model set $\Lambda'$ \cite{meyer1972algebraic}, i.e.,\ there exists a finite set $F \subseteq G$ such that $\Lambda' \subseteq F\Lambda$. This result has been proved for approximate lattices in connected solvable Lie groups (in particular, nilpotent Lie groups) \cite{machado2020approximate, machado2022infinite} and most recently in the general seting of approximate lattices in amenable groups \cite{machado-2023}. Uniform approximate lattices might exist in groups not admitting a uniform lattice, see \cite[Section 2.4]{bjorklund2018approximate} and \cite[Theorem 1.5]{machado2020approximate}.

In the sequel, only amenable groups $G$ (see \Cref{sec:beurling}) will be considered. If such a group admits a uniform approximate lattice, then it is necessarily unimodular. In addition, any compactly generated group $G$ admitting a uniform approximate lattice is necessarily unimodular. See \cite[Theorem 5.8]{bjorklund2018approximate} for both assertions.

\subsection{Beurling density} \label{sec:beurling}
A second-countable locally compact group $G$ is called \emph{amenable} if it admits a (right) F\o lner sequence, i.e., a sequence $(K_n)_{n \in \mathbb{N}}$ of non-null compact subsets $K_n \subseteq G$ satisfying 
\[
\lim_{n \to \infty} \frac{\mu(K_n \triangle K_n K)}{\mu(K_n)} = 0
\]
for all compact sets $K \subseteq G$. Here, $\triangle$ denotes the symmetric difference $A \triangle B = (A \setminus B) \cup (B \setminus A)$.

A \emph{(right) strong F\o lner sequence} is a F\o lner sequence $(K_n)_{n \in \N}$ satisfying the stronger condition
\[
\lim_{n \to \infty} \frac{\mu( K_n K \cap K_n^c K)}{\mu (K_n)} = 0
\]
for all compact sets $K \subseteq G$. Strong F\o lner sequences are also called van Hove sequences \cite{ornstein1987entropy,schlottmann2000generalized} and do exist in every second-countable amenable group, see, e.g., \cite{zorinkranich2014return, pogorzelski2021leptin, pogorzelski2016banach}. In fact, if $(K_n)_{n \in \mathbb{N}}$ is a F\o lner sequence and $L$ is any compact, symmetric neighborhood of the identity, then $(K_nL)_{n}$ is a strong Følner sequence, see, e.g., \cite[Proposition 5.10]{pogorzelski2021leptin}.

If $G$ is a compactly generated group of polynomial growth, i.e., given a symmetric generating neighborhood $U \subseteq G$, there exist $C, D > 0$ such that $\mu(U^n) \leq C n^D$ for all $n \in \mathbb{N}$, 
then any sequence of balls $(B_{r_n}(e) )_{n \in \N}$ with $\lim_n r_n \to \infty$ associated to a so-called \emph{periodic metric} form a strong F\o lner sequence, cf. \cite{breuillard2014geometry}. In particular, this applies to word metrics and to left-invariant Riemannian metrics and Carnot-Carath\'eodory metrics on connected Lie groups; see \cite[Example 4.3]{breuillard2014geometry} and \cite[Theorem 9]{tessera2007volume}. 

Given any strong F\o lner sequence $(K_n)_{n \in \mathbb{N}}$ in an amenable unimodular group $G$, the associated \emph{lower} and \emph{upper Beurling density} of a discrete set $\Lambda \subseteq G$ are defined by
\begin{align*}
    D^-(\Lambda) = \liminf_{n \to \infty} \inf_{x \in G} \frac{|\Lambda \cap x K_n|}{\mu(K_n)} \quad \text{and} \quad D^+(\Lambda) = \limsup_{n \to \infty} \sup_{x \in G} \frac{|\Lambda \cap x K_n|}{\mu(K_n)},
\end{align*}
respectively. The densities $D^-$ and $D^+$ are independent of the choice of strong  F\o lner sequence (cf.\ \cite[Proposition 5.14]{pogorzelski2021leptin}). 

A uniform lattice $\Lambda \subseteq G$ has Beurling density given by
\[
D^- (\Lambda) = D^+ (\Lambda) = \vol(G/ \Lambda)^{-1}.
\]
More generally, if $\Lambda \subseteq G$ is a model set coming from a cut-and-project scheme $(G,H,\Gamma)$ with a uniform lattice $\Gamma \leq G \times H$ and window $W \subseteq H$ (cf.\ \Cref{ex:model_set}), then
\[ D^-(\Lambda) = D^+(\Lambda) = \frac{\mu_H(W)}{\vol((G\times H)/\Gamma)} , \]
where $\mu_H$ denotes Haar measure on $H$, cf.\ \cite[Theorem 7.2]{pogorzelski2021leptin}. In general, however, approximate lattices need not have coinciding lower and upper Beurling density, see, e.g.,\ \cite[Example 4.15]{bjorklund2018approximate}.

\section{Density conditions for coherent systems} \label{sec:density_coherent} 

This section provides density conditions for various reproducing properties of coherent systems. Henceforth, it will additionally be assumed that $G$ is amenable and unimodular.

\subsection{Coherent systems}

A \emph{projective unitary representation} of $G$ is a strongly continuous map $\pi : G \to \mathcal{U}(\Hpi)$ on a Hilbert space $\Hpi$ such that
\[ \pi(x)\pi(y) = \sigma(x,y)\pi(xy) , \quad x,y \in G ,\]
where $\sigma \colon G \times G \to \T$ is an associated (necessarily measurable) function called a \emph{2-cocycle}. A projective representation $\pi$ is called \emph{irreducible} if $\{ 0 \}$ and $\Hpi$ are the only $\pi(G)$-invariant closed subspaces of $\Hpi$.

 An irreducible projective unitary representation is called a \emph{discrete series} if there exists a non-zero $g \in \Hpi$ such $\int_G | \langle g, \pi(x) g \rangle |^2 d\mu (x) < \infty$. In that case, there exists $d_\pi > 0$, called the \emph{formal degree} of $\pi$, satisfying
\begin{align} \label{eq:ortho}
\int_G | \langle f, \pi(x) g \rangle |^2 \; d\mu (x) = d_{\pi}^{-1} \| f \|_{\Hpi}^2 \| g \|_{\Hpi}^2, 
\end{align}
for all $f,g \in \Hpi$, see, e.g.,\ \cite{dixmier1977cstar,gaal1973}.

Given $g \in \Hpi$ and $\Lambda \subseteq G$, we refer to a family of vectors in $\Hpi$ of the form
\[ \pi(\Lambda) g = ( \pi(\lambda)g )_{\lambda \in \Lambda} \]
as a \emph{coherent system} in $\Hpi$. A coherent system is allowed to contain repeating elements. 

\subsection{The homogeneous approximation property}
A coherent system $\pi(\Lambda) g$ is said to be \emph{complete}  if $\clspn \pi(\Lambda)g = \Hpi$. Using the notation $\dist(f,S) = \inf_{g \in S}\| f - g \|$ for subsets $S \subseteq \Hpi$, it follows that completeness of $\pi(\Lambda)g$ is equivalent to the following: For every $f \in \Hpi$ and every $\varepsilon > 0$, there exists a compact unit neighborhood $K \subseteq G$ such that
\[ \dist \big(f, \; \spn \{ \pi (\lambda) g : \lambda \in \Lambda \cap K \} \big) < \varepsilon .\]

The following stronger form of completeness will be central in the arguments of this section. 
For applications to frames (see \Cref{sec:frames_riesz}), it will be convenient to consider general systems $( g_{\lambda} )_{\lambda \in \Lambda}$ of vectors $g_{\lambda} \in \Hpi$ and not just coherent systems $\pi(\Lambda) g$. 

\begin{definition} \label{def:HAP}
Let $\Lambda \subseteq G$ be countable. A family $( g_{\lambda} )_{\lambda \in \Lambda}$ of vectors $g_{\lambda} \in \Hpi$ satisfies the \emph{homogeneous approximation property (HAP)} if for every $f \in \Hpi$ and every $\varepsilon > 0$ there exists a compact unit neighborhood $K \subseteq G$ such that
\begin{align} \label{eq:HAP}
    \dist \big( \pi(x)f, \; \spn \{ g_{\lambda} : \lambda \in \Lambda \cap x K \} \big) < \varepsilon 
\end{align}
for all $x \in G$.
\end{definition}

The following theorem provides a necessary density condition for systems satisfying the HAP. 

\begin{theorem}\label{thm:HAP_density}
Let $\Lambda \subseteq G$ be locally finite and suppose that $( g_{\lambda} )_{\lambda \in \Lambda}$ satisfies the homogeneous approximation property \eqref{eq:HAP}. Then
\[ D^-(\Lambda) \geq d_{\pi} .\]
\end{theorem}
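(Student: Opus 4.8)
The plan is to compare the ``mass'' of the orbit seen through a fixed test vector against a Følner sequence, exploiting both the orthogonality relation \eqref{eq:ortho} and the HAP. Fix a unit vector $g_0 \in \Hpi$ (for coherent systems one takes $g_0 = g/\|g\|$, but the argument only needs a single normalized vector whose matrix coefficient $x \mapsto \langle g_0, \pi(x) g_0\rangle$ is square-integrable, which holds automatically since $\pi$ is a discrete series). For $x \in G$ consider the rank-one quantity $\|P_{F_n(x)} \pi(x) g_0\|^2$, where $F_n(x) = \Lambda \cap x K_n$ and $P_S$ denotes the orthogonal projection onto $\spn\{g_\lambda : \lambda \in S\}$. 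On one hand, by the HAP, for any $\varepsilon > 0$ there is a compact unit neighborhood $K$ with $\|P_{\Lambda \cap xK}\pi(x)g_0\|^2 \geq 1 - \varepsilon$ for every $x$; since $K \subseteq K_n$ eventually (the $K_n$ exhaust $G$, or more precisely one absorbs $K$ into the Følner sets), the same bound holds with $F_n(x)$ for $n$ large, uniformly in $x$. On the other hand, $\|P_{F_n(x)}\pi(x)g_0\|^2 \leq \sum_{\lambda \in F_n(x)} |\langle \pi(x) g_0, e_\lambda \rangle|^2$ is not quite the right estimate because the $g_\lambda$ need not be orthonormal; instead I would bound the projection's trace by something summable. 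The cleaner route: estimate $\|P_{F_n(x)}\pi(x)g_0\|^2 \leq \operatorname{tr}(P_{F_n(x)} \pi(x) g_0 \otimes \pi(x)g_0)$ and integrate in $x$ over a Følner set.

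Concretely, I would integrate the inequality $1 - \varepsilon \leq \|P_{F_n(x)}\pi(x)g_0\|^2$ over $x$ ranging in a large Følner set $K_m$, getting $(1-\varepsilon)\mu(K_m) \leq \int_{K_m} \|P_{F_n(x)}\pi(x)g_0\|^2\, d\mu(x)$. The key is then to bound the right-hand side above by (roughly) $d_\pi^{-1}$ times the number of orbit points involved. For this I want the rank bound: $\|P_{F_n(x)}\pi(x)g_0\|^2 \leq \dim \spn\{g_\lambda : \lambda \in F_n(x)\} \cdot \sup \ldots$ — no, better to use that for any finite-dimensional subspace $V$ with orthonormal basis $\{v_j\}$, $\|P_V h\|^2 = \sum_j |\langle h, v_j\rangle|^2$, and crucially $\int_G |\langle \pi(x) g_0, v\rangle|^2 d\mu(x) = d_\pi^{-1}\|v\|^2$ by \eqref{eq:ortho} for any fixed $v \in \Hpi$. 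Summing over an orthonormal basis of the (at most $|F_n(x)|$-dimensional) span and integrating gives $\int_{K_m} \|P_{F_n(x)}\pi(x)g_0\|^2 d\mu(x) \leq d_\pi^{-1} \cdot \#\{(\lambda, x) : \lambda \in \Lambda,\ x \in K_m,\ \lambda \in xK_n\}$, which by Fubini equals $d_\pi^{-1}\sum_{\lambda \in \Lambda \cap K_m K_n}\mu(\lambda K_n^{-1} \cap K_m) \leq d_\pi^{-1}|\Lambda \cap K_m K_n|\,\mu(K_n)$. The subtlety is making the orthonormal-basis-dependent bound uniform; one handles this by noting the inequality $\|P_V h\|^2 \le \sum_{\lambda \in S}\langle$ something $\rangle$ holds after choosing, for each $x$, a subset of $F_n(x)$ whose associated vectors form a basis — and the resulting integrand is still dominated by $\sum_{\lambda \in F_n(x)}$-type sums that integrate correctly. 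I expect this ``uncoupling the projection from a varying basis'' step to be the main technical obstacle.

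Combining the two bounds yields $(1-\varepsilon)\mu(K_m) \leq d_\pi^{-1}|\Lambda \cap K_m K_n|\,\mu(K_n)$. Now divide by $\mu(K_m)$, let $m \to \infty$ using that $(K_m)$ is (strong) Følner so that $\mu(K_m K_n)/\mu(K_m) \to 1$ and — via the definition of $D^-$ applied to the Følner sequence $K_m K_n$ or directly to $K_m$ after absorbing $K_n$ — one extracts $|\Lambda \cap K_m K_n| \gtrsim D^-(\Lambda)\,\mu(K_m)$ in the limit. More carefully: $D^-(\Lambda) \mu(K_m) \le \inf_x |\Lambda \cap xK_m|$ is the wrong direction; instead I would run the Følner averaging so that the left side produces $(1-\varepsilon) \le d_\pi^{-1} \liminf_m \frac{|\Lambda \cap K_m K_n|}{\mu(K_m K_n)}\cdot\frac{\mu(K_m K_n)\mu(K_n)}{\mu(K_m)}$, and since $\mu(K_m K_n)/\mu(K_m)\to 1$ and $\liminf_m |\Lambda \cap K_m K_n|/\mu(K_m K_n) \ge $ — here one must be cautious, as $D^-$ involves an infimum over translates, so actually $\liminf_m \frac{|\Lambda \cap K_m K_n|}{\mu(K_m K_n)}$ need only be $\ge D^-(\Lambda)$ trivially is false too; rather $\le D^+$. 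The correct packaging: keep $n$ fixed, average over a two-parameter family, and use that the Følner quotient forces the bound to be read as $1 - \varepsilon \le d_\pi^{-1}\cdot \big(\text{a quantity} \le D^+(\Lambda)\cdot(1+o(1))\big)$ — but we want $D^-$. I would therefore instead invoke the \emph{lower} density by running the HAP estimate on translates $x K_n$ as $x$ varies and taking the infimum, which is precisely what $D^-$ records: replace the global average by $\inf_{x} \|P_{\Lambda \cap xK_n}\pi(x)g_0\|^2 \ge 1-\varepsilon$ and then the counting bound $\|P_{\Lambda\cap xK_n}\pi(x)g_0\|^2 \le |\Lambda \cap xK_n| \sup_\lambda |\langle \ldots\rangle|^2$ is too lossy, so the honest argument is the integrated one above, concluding $d_\pi \le (1-\varepsilon)^{-1}\limsup_n \big(\text{Følner average of } |\Lambda \cap xK_n|/\mu(K_n)\big)$, and then a separate standard lemma (uniform Følner averages of $|\Lambda \cap xK_n|/\mu(K_n)$ converge to values pinched between $D^-$ and $D^+$, combined with sending $\varepsilon \to 0$ and using that the HAP neighborhood $K$ can be taken inside $K_n$) upgrades this to $d_\pi \le D^-(\Lambda)$. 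Letting $\varepsilon \to 0$ finishes the proof.
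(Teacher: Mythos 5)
Your proposal assembles the right ingredients (the HAP neighborhood, the orthogonality relations \eqref{eq:ortho}, a rank bound on a finite-dimensional span, a strong F\o lner limit), but the central integration step is set up in a way that does not close, and you flag both resulting problems yourself without resolving them. The first is what you call ``uncoupling the projection from a varying basis'': you integrate $\|P_{F_n(x)}\pi(x)g_0\|^2$ over $x \in K_m$ while the subspace $\spn\{g_\lambda : \lambda \in F_n(x)\}$ changes with the integration variable. The orthogonality relations only evaluate $\int_G |\langle \pi(x)g_0, v\rangle|^2\,d\mu(x)$ for a \emph{fixed} vector $v$, and there is no globally consistent orthonormal family $(v_\lambda)_{\lambda\in\Lambda}$ that is simultaneously a basis of every $V_{F_n(x)}$, so the bound $\int_{K_m}\|P_{F_n(x)}\pi(x)g_0\|^2\,d\mu(x) \le d_\pi^{-1}\,\#\{(\lambda,x)\}$ does not follow. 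Even granting it, the double count $\#\{(\lambda,x): x\in K_m,\ \lambda\in xK_n\} = \sum_\lambda \mu(K_m\cap\lambda K_n^{-1})$ weighs each point of $\Lambda$ by up to $\mu(K_n)$, so the resulting inequality reads $(1-\varepsilon)d_\pi \le \mu(K_n)\cdot(\text{a density-like quantity})$ --- off by the non-removable factor $\mu(K_n)$, which is at least $\mu(K)$ since you need $K\subseteq K_n$ for the HAP step. The second problem, which you also notice (``rather $\le D^+$''), is that integrating over $x\in K_m$ with no free translation parameter can only produce an averaged or upper density on the right-hand side; there is no ``standard lemma'' upgrading $d_\pi\le(\text{F\o lner average})$ to $d_\pi\le D^-(\Lambda)$, since such averages sit between $D^-$ and $D^+$ and the inequality points the wrong way.

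The paper's proof resolves both issues with one change of viewpoint: fix an arbitrary $x\in G$ and $n$, let $V_{x,n}=\spn\{g_\lambda:\lambda\in\Lambda\cap xK_n\}$ be a \emph{single} finite-dimensional subspace with projection $P$, and integrate $\|P\pi(y)\eta\|^2$ over $y$ ranging in the shrunken translate $x(K_n\setminus K_n^cK)$, on which the HAP gives $\|P\pi(y)\eta\|^2\ge 1-\varepsilon$ because $yK\subseteq xK_n$ there. Since the subspace is fixed, one orthonormal basis $(h_i)_{i=1}^N$ of $V_{x,n}$ and \eqref{eq:ortho} applied to each $h_i$ yield exactly $\int_G\|P\pi(y)\eta\|^2\,d\mu(y)=d_\pi^{-1}\dim V_{x,n}\le d_\pi^{-1}|\Lambda\cap xK_n|$, with the correct normalization. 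This gives $(1-\varepsilon)\,d_\pi\,\mu(K_n\setminus K_n^cK)/\mu(K_n)\le|\Lambda\cap xK_n|/\mu(K_n)$ for \emph{every} $x$, and taking the infimum over $x$ and then $n\to\infty$ produces $D^-(\Lambda)\ge d_\pi$ directly. To repair your argument, decouple the two roles of $x$: keep the center of the F\o lner set fixed and integrate the group element fed into $\pi$ over that translate against the fixed subspace.
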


\begin{proof}
Let $\eta \in \Hpi$ be a unit vector, let $\varepsilon > 0$ and let $(K_n)_{n \in \mathbb{N}}$ be a strong F\o lner sequence in $G$. Since $\Lambda$ is locally finite, $V_{x, n} := \Span \{ g_{\lambda} : \lambda \in \Lambda \cap x K_n \}$ is a finite-dimensional subspace of $\Hpi$ for every $x \in G$ and $n \in \N$.

By assumption, there exists a symmetric compact unit neighborhood $K  \subseteq G$ such that, for all $y \in G$,
\[\dist \big(\pi(y) \eta, \; \Span \{ g_{\lambda} : \lambda \in \Lambda \cap y K \} \big) \leq \sqrt{\varepsilon}. \]
 Note that if $y \in x(K_n \setminus K_n^c K)$, then $y K \subseteq x K_n$. Indeed, since $x^{-1} y \in K_n$ but $x^{-1} y \notin K_n^c K$, it follows that $x^{-1} y \notin K_n^c k$ for all $k \in K$, and  $ y k^{-1} \in x K_n$ for all $k \in K$, so that $yK \subseteq x K_n$ by symmetry of $K$.
Therefore, 
\[
\dist \big(\pi(y) \eta, \; V_{x, n}\big) \leq \dist \big( \pi(y) \eta, \; \Span \{ g_{\lambda} : \lambda \in \Lambda \cap yK \} \big) \leq \sqrt{\varepsilon}, \quad y \in x(K_n \setminus K_n^c K).
\]
Denote by $P$ the orthogonal projection onto $V_{x,n}$. Then
\[
\| P \pi (y) \eta \|^2_{\Hpi} \geq \| \pi(y) \eta \|_{\Hpi}^2 - \| (I - P) \pi(y) \eta \|_{\Hpi}^2 \geq 1 - \varepsilon.
\]
Integrating this inequality over $x(K_n \setminus K_n^c K)$  gives
\[
(1-\varepsilon) \mu (K_n \setminus K_n^c K) \leq \int_{x( K_n \setminus K_n^c K)} \| P \pi(y) \eta \|_{\Hpi}^2 \; d\mu (y) \leq \int_{G} \| P \pi(y) \eta\|_{\Hpi}^2 \; d\mu (y)
\]
by left-invariance of Haar measure. 

For further estimating the right-hand side, let $(h_i)_{i = 1}^N$ be an orthonormal basis for $V_{x,n}$. Then, the orthogonality relations \eqref{eq:ortho} for $\pi$ and the fact that $\eta$ is of unit norm, yield
\[
\int_G \| P \pi(y) \eta \|_{\Hpi}^2 \; d\mu (y)  = \sum_{i = 1}^N \int_{G}  |\langle h_i, \pi(y) \eta \rangle |^2 \; d\mu (y) = d_{\pi}^{-1} \sum_{i = 1}^N \| h_i \|_{\Hpi}^2 = d_{\pi}^{-1} \dim V_{ x,n}. 
\]
Combining the above gives
\[
(1-\varepsilon) \mu (K_n \setminus K_n^c K) \leq d_{\pi}^{-1} \dim V_{x,n} \leq d_{\pi}^{-1} |\Lambda \cap x K_n|,
\]
and hence
\[
(1 - \varepsilon) d_{\pi} \frac{\mu (K_n \setminus K_n^c K)}{\mu( K_n )} \leq  \frac{ |\Lambda \cap x K_n |}{\mu (K_n)}. 
\]
As $x \in G$ and $\varepsilon > 0$ were chosen arbitrary, this shows that
\[
\inf_{x \in G} \frac{|\Lambda \cap x K_n|}{ \mu(K_n)} \geq d_{\pi} \frac{\mu (K_n \setminus K_n^c K)}{\mu(K_n)}. 
\]
Since
\[ \frac{\mu (K_n \setminus K_n^cK)}{\mu (K_n)} = \frac{\mu (K_n \setminus (K_n^cK \cap K_n))}{\mu (K_n)} = 1 - \frac{\mu (K_n^cK \cap K_n)}{\mu (K_n)} \geq 1 - \frac{\mu(K_n^cK \cap K_nK)}{\mu(K_n)} \to 1 \]
as $n \to \infty$, it follows that $D^- (\Lambda) \geq d_{\pi}$, as required.
\end{proof}

The above result is an extension of \cite[Theorem 3(2)]{ahn2018density} for $G = \mathbb{R}^2$ to arbitrary unimodular, amenable groups, and follows the proof outline closely.
See also \cite[Theorem 4]{nitzan2012revisiting} and \cite[Theorem 1]{RaSt95} for related proof techniques.

\subsection{Complete systems}

The theme of this subsection is the interaction between completeness and the homogeneous approximation property for coherent systems.

\begin{proposition}\label{prop:squared_hap}
Let $\Lambda \subseteq G$ be relatively dense and $g \in \Hpi$. Suppose that $\pi(\Lambda)g$ is complete. Then $\pi(\Lambda^2)g$ satisfies the homogeneous approximation property \eqref{eq:HAP}.
\end{proposition}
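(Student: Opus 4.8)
The plan is to deduce the HAP for $\pi(\Lambda^2)g$ from the completeness of $\pi(\Lambda)g$ by exploiting the covariance of coherent systems together with the relative denseness of $\Lambda$. First I would record the key covariance identity: since $\pi$ is projective with cocycle $\sigma$ of modulus one, we have $\pi(y)\pi(\lambda)g = \sigma(y,\lambda)\pi(y\lambda)g$ for every $y \in G$ and $\lambda \in \Lambda$, and scalars of modulus one do not affect spans or distances. Hence for any $S \subseteq G$,
\[
\dist\bigl(\pi(y)f,\ \spn\{\pi(s)g : s \in \Lambda \cap S\}\bigr) = \dist\bigl(f,\ \spn\{\pi(y^{-1}s)g : s \in \Lambda \cap S\}\bigr),
\]
so controlling distances at the identity (where completeness lives) transfers to all translates, provided the index set $y^{-1}(\Lambda \cap S)$ captures enough of $\Lambda$.

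Next I would unwind completeness of $\pi(\Lambda)g$ in its quantitative form stated just before \Cref{def:HAP}: given $f \in \Hpi$ and $\veps > 0$, there is a compact unit neighborhood $K_0 \subseteq G$ with $\dist(f,\ \spn\{\pi(\lambda)g : \lambda \in \Lambda \cap K_0\}) < \veps$. By relative denseness of $\Lambda$, fix a compact set $L \subseteq G$ with $\Lambda L = G$, i.e.\ every $y \in G$ lies in $\lambda_y L$ for some $\lambda_y \in \Lambda$; equivalently $\lambda_y \in yL^{-1}$ and $\lambda_y \in \Lambda$. Set $K := L^{-1}K_0$, a compact unit neighborhood (enlarging $L$ if needed so that $e \in L$). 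The claim is that this $K$ works for the HAP for the system $\pi(\Lambda^2)g$. Indeed, for $y \in G$ pick $\lambda_y \in \Lambda \cap yL^{-1}$. Applying completeness to the vector $\pi(\lambda_y^{-1}y)f$ (with the same $\veps$) would still require a $y$-dependent neighborhood, so instead I would apply it once to a fixed $f$ and then translate: writing $y = \lambda_y \cdot (\lambda_y^{-1}y)$ with $\lambda_y^{-1}y \in L^{-1}$, we get
\[
\dist\bigl(\pi(y)f,\ \spn\{\pi(\lambda_y \lambda)g : \lambda \in \Lambda \cap K_0'\}\bigr)
\]
small, where $K_0'$ is the neighborhood from completeness applied to the vector $\pi(\lambda_y^{-1}y)f$; uniformity in $y$ then comes from the fact that $\{\pi(z)f : z \in L^{-1}\}$ is a norm-compact subset of $\Hpi$ (continuity of $\pi$), so finitely many $K_0$'s — hence a single common $K_0$ — suffice for all $z \in L^{-1}$ simultaneously. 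Finally $\lambda_y\lambda \in \Lambda^2$ and $\lambda_y\lambda \in (yL^{-1})(K_0) = y(L^{-1}K_0) = yK$, so the index set is contained in $\Lambda^2 \cap yK$, which is exactly what the HAP requires.

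The main obstacle I anticipate is precisely the uniformity over $x \in G$ in \eqref{eq:HAP}: completeness a priori gives, for each target vector, its own compact neighborhood, and here the target vector $\pi(\lambda_y^{-1}y)f$ varies with $y$. The resolution is the compactness argument sketched above — the set $\{\pi(z)f : z \in L^{-1}\}$ is compact, cover it by finitely many $\veps/2$-balls centered at $\pi(z_1)f,\dots,\pi(z_m)f$, apply completeness to each $\pi(z_j)f$ to obtain compact neighborhoods $K_j$, and take $K_0 = \bigcup_j K_j$ (still compact, still a unit neighborhood). A secondary, routine point to handle carefully is bookkeeping the cocycle phases and the fact that the coherent system may contain repeated elements, but neither affects spans or distances, so these cause no real trouble. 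Assembling these pieces yields the HAP for $\pi(\Lambda^2)g$ with the explicit neighborhood $K = L^{-1}K_0$.
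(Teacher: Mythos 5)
Your proposal is correct and follows essentially the same route as the paper: use compactness of the set $L$ with $\Lambda L = G$ together with strong continuity of $\pi$ to get a single compact set from completeness that approximates $\pi(z)f$ uniformly for $z$ in $L$, then translate by the element $\lambda_y \in \Lambda$ (absorbing the unimodular cocycle phases) to land in $\spn \pi(\Lambda^2 \cap yK)g$. The only blemish is a harmless bookkeeping slip between $L$ and $L^{-1}$ (from $y \in \lambda_y L$ one gets $\lambda_y^{-1}y \in L$, not $L^{-1}$), which disappears upon taking $L$ symmetric.
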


\begin{proof}
Since $\Lambda$ is relatively dense, there exists a compact set $K \subseteq G$ such that $G = \Lambda K$.  Let $f \in \Hpi$ and $\varepsilon > 0$.
\\\\
\textbf{Step 1.} In this step, we show that there exists a compact set $K'$ such that
\begin{equation}
    \dist(\pi(y)f, \spn \pi(\Lambda \cap K')g) < \varepsilon \;\;\; \text{for all $y \in K$.} \label{eq:compact_hap}
\end{equation}
By the strong continuity of $\pi$, there exists for each $y \in K$ an open neighborhood $U_y \subseteq G$ of $y$ such that $\| \pi(y)f - \pi(z)f \| < \varepsilon / 2$ for each $z \in U_y$. Since the sets $U_y$ cover $K$, it follows by compactness that there exist $y_1, \ldots, y_k \in K$ such that $K \subseteq U_{y_1} \cup \cdots \cup U_{y_k}$.

Since $\pi(\Lambda) g$ is complete, there exists for each $y \in K$ a compact unit neighborhood $K'_y$ such that
\[ \dist( \pi(y)f, \spn \pi(\Lambda \cap K'_y)g ) < \frac{\varepsilon}{2} .\]
Set $K' = \bigcup_{1 \leq j \leq k} K'_{y_j}$. If $y \in K$, then $y \in U_{y_j}$ for some $1 \leq j \leq k$, and hence
\begin{align*}
    \dist(\pi(y)f, \spn \pi(\Lambda \cap K')g ) &\leq \| \pi(y)f - \pi(y_j)f \| + \dist(\pi(y_j)f, \spn \pi(\Lambda \cap K')g) \\
    &< \frac{\varepsilon}{2} + \dist(\pi(y_j)f, \spn \pi(\Lambda \cap K_{y_j})g) \\
    &< \frac{\varepsilon}{2} + \frac{\varepsilon}{2} = \varepsilon .
\end{align*}
This establishes \eqref{eq:compact_hap}.
\\\\
\textbf{Step 2.} 
Let $x \in G$ and write $x = \lambda y$ for some $\lambda \in \Lambda$ and $y \in K$. By \eqref{eq:compact_hap}, there exist $c_1, \ldots, c_n \in \C$ and $\lambda_1, \ldots, \lambda_n \in \Lambda \cap K'$ such that
\[ \Big\| \pi(y)f - \sum_{i=1}^n c_i \pi(\lambda_i) g \Big\| < \varepsilon .\]
Applying the unitary operator $\pi(\lambda)$ inside the norm yields
\begin{align*}
    \Big\| \pi(y) f - \sum_{i=1}^n c_i \pi(\lambda_i) g \Big\| &= \Big\| \pi(\lambda) \pi(y) f - \sum_{i=1}^n c_i \pi(\lambda) \pi(\lambda_i)g \Big\| \\
    &= \Big\| \sigma(\lambda, y) \pi(x)f - \sum_{i=1}^n c_i \sigma(\lambda,\lambda_i) \pi(\lambda \lambda_i) g \Big\| .
\end{align*}
Hence,
\[ \Big\| \pi(x) f - \sum_{i=1}^n c_i \overline{\sigma(\lambda,y)} \sigma(\lambda,\lambda_i) \pi(\lambda \lambda_i)g \Big\| < \varepsilon . \]
Note that $\lambda \lambda_i \in \Lambda^2$ and that $\lambda \lambda_i = x y^{-1} \lambda_i \in x K^{-1} K'$. Therefore, setting $K_0 = K^{-1} K'$ gives
\[ \dist( \pi(x)f, \spn \pi(\Lambda^2 \cap x K_0)) ) < \varepsilon, \]
which finishes the proof.
\end{proof}

 \Cref{prop:squared_hap} is an adaption of the proof of \cite[Theorem 2]{RaSt95} to possibly non-subgroup index sets. An immediate consequence is the following.

\begin{corollary}
If $\Lambda$ is a uniform lattice in $G$ and $\pi(\Lambda)g$ is complete for $g \in \Hpi$, then $\pi(\Lambda)g$ satisfies the homogeneous approximation property \eqref{eq:HAP}.
\end{corollary}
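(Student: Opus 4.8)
The plan is to deduce this directly from \Cref{prop:squared_hap} using only two elementary observations about uniform lattices. First, I note that a uniform lattice $\Lambda \subseteq G$ is, by definition, a discrete co-compact subgroup. Co-compactness of $\Lambda$ means that $G/\Lambda$ is compact, which is equivalent to the existence of a compact set $K \subseteq G$ with $\Lambda K = G$; hence $\Lambda$ is relatively dense in the sense of \Cref{sec:prelim}. Second, since $\Lambda$ is a subgroup we have $e \in \Lambda$ and $\Lambda \cdot \Lambda \subseteq \Lambda$, and combining the latter inclusion with $\Lambda = e\Lambda \subseteq \Lambda^2$ gives $\Lambda^2 = \Lambda$.

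With these two facts in hand, the argument is immediate. Applying \Cref{prop:squared_hap} to the relatively dense set $\Lambda$, the completeness of $\pi(\Lambda)g$ implies that $\pi(\Lambda^2)g$ satisfies the homogeneous approximation property \eqref{eq:HAP}. Since $\Lambda^2 = \Lambda$, the system $\pi(\Lambda^2)g$ is literally the system $\pi(\Lambda)g$, and therefore $\pi(\Lambda)g$ satisfies the HAP, as claimed.

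There is no genuine obstacle here: the corollary is just the special case of \Cref{prop:squared_hap} in which passing to the square of the index set is vacuous because the index set is already a group. The only point worth stressing is conceptual rather than technical, namely that for general approximate lattices the square $\Lambda^2$ is strictly larger than $\Lambda$, which is exactly why \Cref{thm:main1} carries the factor $k$ relative to the classical lattice density theorem, while for genuine uniform lattices no such loss appears.
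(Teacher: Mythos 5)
Your argument is correct and is essentially identical to the paper's own proof: both deduce relative density from co-compactness, apply \Cref{prop:squared_hap} to conclude that $\pi(\Lambda^2)g$ satisfies the HAP, and then use $\Lambda^2 = \Lambda$ for a subgroup to finish. Nothing further is needed.
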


\begin{proof}
Since $\Lambda$ is uniform, it is relatively dense. An application of \Cref{prop:squared_hap} then gives that $\pi(\Lambda^2)g$ satisfies the homogeneous approximation property. Since $\Lambda$ is a lattice, we have $\Lambda^2=\Lambda$, so the conclusion follows.
\end{proof}

The following theorem is the main result of this paper. It provides an extension of the density theorems \cite{bekka2004square, romero2022density} for complete lattice orbits to arbitrary approximate lattices.

\begin{theorem}\label{thm:density_approx}
Let $\Lambda \subseteq G$ be a uniform $k$-approximate lattice. If there exists $g \in \Hpi$ such that $\pi(\Lambda)g$ is complete, then
\[ D^-(\Lambda) \geq \frac{d_{\pi}}{k} .\]
\end{theorem}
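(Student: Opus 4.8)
The plan is to feed the hypothesis into the homogeneous approximation property machinery already assembled in this section, and then to pass from a density bound on $\Lambda^2$ to one on $\Lambda$ by exploiting the approximate‑group structure. First, since $\Lambda$ is a uniform $k$‑approximate lattice it is in particular relatively dense, so \Cref{prop:squared_hap} shows that $\pi(\Lambda^2)g$ satisfies the HAP. Moreover $\Lambda^2$ is locally finite: it is contained in $F\Lambda$ for a finite set $F$ with $|F|\le k$, and a finite union of left translates of the locally finite set $\Lambda$ is again locally finite, hence so is every subset of it. Therefore \Cref{thm:HAP_density} applies to the coherent system $\pi(\Lambda^2)g$ and yields
\[ D^-(\Lambda^2)\ge d_{\pi} . \]
It then remains to establish the density comparison $D^-(\Lambda^2)\le k\, D^-(\Lambda)$, and combining this with the previous inequality gives $D^-(\Lambda)\ge d_{\pi}/k$ at once.

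For the density comparison the key point is to use the \emph{right}-hand decomposition of the approximate group rather than the left-hand one: since $\Lambda^2$ is symmetric and $\Lambda^2\subseteq F\Lambda$, taking inverses gives $\Lambda^2\subseteq \Lambda F^{-1}$, with $|F^{-1}|\le k$. Fix a strong F\o lner sequence $(K_n)_n$ in $G$ and choose a compact symmetric unit neighborhood $L$ with $F\subseteq L$ (such $L$ exists as $F$ is finite, e.g.\ $L=W\cup F\cup F^{-1}$ for any compact symmetric unit neighborhood $W$). Then $\tilde K_n:=K_nL$ is again a strong F\o lner sequence, and since $e\in L$ one has $K_n\subseteq\tilde K_n$ and $\mu(\tilde K_n)/\mu(K_n)\to 1$ by the F\o lner property. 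Now fix $x\in G$: if $\gamma\in\Lambda^2\cap xK_n$, write $\gamma=\lambda f'$ with $\lambda\in\Lambda$ and $f'\in F^{-1}$; then $\lambda=\gamma(f')^{-1}\in xK_nF\subseteq x\tilde K_n$, so that $\gamma\in(\Lambda\cap x\tilde K_n)F^{-1}$. This yields the pointwise bound $|\Lambda^2\cap xK_n|\le|F^{-1}|\,|\Lambda\cap x\tilde K_n|\le k\,|\Lambda\cap x\tilde K_n|$, valid for every $x$ simultaneously. Dividing by $\mu(K_n)$, taking $\inf_{x\in G}$ and then $\liminf_{n\to\infty}$, and using both $\mu(\tilde K_n)/\mu(K_n)\to 1$ and the independence of the Beurling density from the choice of strong F\o lner sequence, gives $D^-(\Lambda^2)\le k\, D^-(\Lambda)$. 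Chaining the inequalities, $d_{\pi}\le D^-(\Lambda^2)\le k\, D^-(\Lambda)$, which is the assertion.

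The step I expect to require the most care is precisely this density comparison, and the subtlety is the choice of which containment to use. The left-hand containment $\Lambda^2\subseteq F\Lambda$ only gives $|\Lambda^2\cap xK_n|\le\sum_{f\in F}|\Lambda\cap f^{-1}xK_n|$, and here $f^{-1}xK_n$ is \emph{not}, uniformly in $x$, a bounded perturbation of $xK_n$: one has $f^{-1}xK_n=x\,(x^{-1}f^{-1}x)K_n$ and the conjugate $x^{-1}f^{-1}x$ is unbounded as $x$ ranges over $G$, so one cannot absorb it into the F\o lner error. Passing to $\Lambda^2\subseteq\Lambda F^{-1}$ repairs this, because then one only ever enlarges $K_n$ on the \emph{right} by a fixed compact set $L$, which a right F\o lner sequence absorbs and which leaves the Beurling density unchanged. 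Beyond this, the only things to check are routine: that $\Lambda^2$ really is locally finite (so that \Cref{thm:HAP_density} is applicable), and that the interchange of $\inf_x$ and $\liminf_n$ with the ratio $\mu(\tilde K_n)/\mu(K_n)$ is legitimate, which holds since the pointwise estimate is uniform in $x$ and the ratio converges to the positive constant $1$.
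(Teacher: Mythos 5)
Your proposal is correct and follows essentially the same route as the paper: \Cref{prop:squared_hap} gives the HAP for $\pi(\Lambda^2)g$, \Cref{thm:HAP_density} gives $D^-(\Lambda^2)\ge d_\pi$, and the comparison $D^-(\Lambda^2)\le k\,D^-(\Lambda)$ is obtained from the right-hand containment $\Lambda^2\subseteq\Lambda F^{-1}$ by enlarging the F\o lner sets on the right by a fixed compact set — exactly the mechanism in the paper's proof, which uses $\Lambda^2\subseteq\Lambda F$ and the bound $|\Lambda y\cap xK_n|\le|\Lambda\cap xK_nK|$. Your observation about why the left-hand containment $F\Lambda$ would not work uniformly in $x$ is a correct and worthwhile remark, and your direct verification that $\Lambda^2$ is locally finite (as a subset of a finite union of translates of $\Lambda$) is a slightly more self-contained substitute for the paper's appeal to finite local complexity.
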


\begin{proof}
Let $F \subseteq G$ be a finite set of cardinality $|F| \leq k$ such that $\Lambda^2 \subseteq \Lambda F$. Since $\Lambda$ is relatively dense and $\pi(\Lambda)g$ is complete, it follows from \Cref{prop:squared_hap} that $\pi(\Lambda^2)g$ satisfies the HAP. Since $\Lambda$ is an approximate lattice, $\Lambda$ has finite local complexity, so the point set $\Lambda^2 = \Lambda^{-1}\Lambda$ is locally finite. Hence, \Cref{thm:HAP_density} applies, which combined with $\Lambda^2 \subseteq \Lambda F$ yields
\[ d_{\pi} \leq D^-(\Lambda^2) \leq D^-(\Lambda F) .\]
Therefore, since $|F| \leq k$, it remains to show that $D^-(\Lambda F) \leq |F|D^-(\Lambda)$.

Let $(K_n)_{n \in \mathbb{N}}$ be a strong F\o lner sequence and let $K \subseteq G$ be a compact symmetric unit neighborhood such that $F \subseteq K$. Then, for fixed $y \in F$, $x \in G$ and $n \in \mathbb{N}$, 
\[
|\Lambda y \cap x K_n | = |\Lambda \cap x K_n y^{-1}| \leq |\Lambda \cap x K_n K  |. 
\]
Therefore,
\begin{align*}
    D^- (\Lambda F) &\leq \liminf_{n \to \infty} \inf_{x\in G} \sum_{y \in F} \frac{| \Lambda y \cap x K_n|}{\mu (K_n)} \\
    &\leq |F| \liminf_{n \to \infty} \inf_{x \in G} \frac{|\Lambda \cap x K_n K|}{\mu (K_n K)} \cdot \frac{\mu (K_n K)}{\mu (K_n)} \\
    &\leq |F|  D^-(\Lambda),
\end{align*}
where we in the last equality used that
\[ \frac{\mu(K_nK)}{\mu(K_n)} = \frac{\mu(K_nK \cap K_n) + \mu(K_nK \cap K_n^c)}{\mu(K_n)} \leq 1 + \frac{\mu(K_nK \cap K_n^cK)}{\mu(K_n)} \to 1 , \quad n \to \infty . \]

In conclusion, the above estimates show that
\[ d_{\pi} \leq D^-(\Lambda^2) \leq D^-(\Lambda F) \leq |F| D^-(\Lambda), \]
as required.
\end{proof}

To conclude this subsection, we illustrate the optimality of \Cref{thm:density_approx}. Let $G = \R^2$ and let $\pi$ be the Schrödinger representation of $\R^2$ on $L^2(\R)$, i.e.,\
\[ \pi(x,\xi)f(t) = e^{2\pi i \xi t} f(t-x), \quad (x,\xi) \in G, \; f \in L^2(\R) .\]
In \cite[Theorem 1]{Wa04}, it is shown that:

\begin{theorem}[\cite{Wa04}] \label{thm:wang}
Let $\Gamma$ be a lattice in $\R^2$ with $D(\Gamma) > 1$. For every $\varepsilon > 0$, there exists $g \in L^2(\R)$ and a subset $\Lambda \subseteq \Gamma$ such that $\pi(\Lambda)g$ is complete in $L^2 (\mathbb{R})$, $D^-(\Lambda) = 0$ and $D^+(\Lambda) < \varepsilon$.
\end{theorem}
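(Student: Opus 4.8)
The plan is to \emph{co-construct} the window $g$ and the set $\Lambda$, tuning $g$ to a prescribed dense sequence of target vectors. (The Gaussian alone will not do: via the Bargmann transform, completeness of $\pi(\Lambda)g_0$ for the Gaussian $g_0$ is equivalent to $\Lambda$, identified with a subset of $\C$, being a uniqueness set for the Fock space, and this fails, e.g., for the lacunary set $\Lambda=\{2^n:n\in\N\}$ on the real axis --- the zero set of an entire function of order zero lying in the Fock space; hence the window must genuinely depend on $\Lambda$.) Fix the Gaussian $\phi$; since $D(\Gamma)>1$ the lattice $\Gamma$ is a sampling set for the Fock space, so $\pi(\Gamma)\phi$ is a Gabor frame. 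Fix a dense sequence $(e_j)_j$ of unit vectors in $L^2(\R)$ and a list of ``tasks'' $(j_r,\delta_r)_{r\in\N}$ in which each index $j$ recurs with $\delta_r\downarrow 0$ along its subsequence. Choose scalars $\alpha_r>0$ decaying super-exponentially (e.g.\ $\alpha_r=2^{-2^r}$) and points $\nu_r\in\Gamma$ escaping to infinity extremely rapidly and in sufficiently general position, and put
\[ g:=\sum_{r\ge 1}\alpha_r\,\pi(\nu_r)\phi,\qquad \Lambda:=\bigsqcup_{r\ge 1}\Lambda_r,\qquad \Lambda_r:=E_r-\nu_r, \]
where $E_r\subseteq\Gamma$ is a large finite set around the origin, to be chosen. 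The goal is to select the $E_r$ and coefficients $(c_\lambda)_{\lambda\in\Lambda_r}$ so that $\big\|\sum_{\lambda\in\Lambda_r}c_\lambda\pi(\lambda)g-\alpha_r e_{j_r}\big\|<\alpha_r\delta_r$ for every $r$.

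To see that this suffices, write $\sum_{\lambda\in\Lambda_r}c_\lambda\pi(\lambda)g=\sum_s\alpha_s G_s$ with $G_s:=\sum_{\lambda\in\Lambda_r}c_\lambda\pi(\lambda)\pi(\nu_s)\phi$. Since $\phi$ is concentrated in time-frequency near the origin and $\Lambda_r$ is a small cluster around $-\nu_r$, the vector $G_s$ is concentrated near $\nu_s-\nu_r$; thus $G_r$ lives near the origin, the $G_s$ with $s\ne r$ live near the pairwise far-apart points $\nu_s-\nu_r$, and the $G_s$ are mutually near-orthogonal. It therefore suffices to arrange: (i) $G_r$ approximates the unit vector $e_{j_r}$ to within $\delta_r$ --- possible because $\{\pi(\mu)\phi\}_{\mu\in\Gamma}$ is a frame and $E_r$ is a large enough finite piece of $\Gamma$ about the origin; (ii) $\|G_s\|$ is tiny for the \emph{finitely many} dangerous indices $s<r$ with $\alpha_s\ge\alpha_r$; (iii) the remaining tail obeys $\sum_{s>r}\alpha_s\|G_s\|\lesssim\alpha_r^{-1}\big(\sum_{s>r}\alpha_s^2\big)^{1/2}<\tfrac12\alpha_r\delta_r$, which holds by the super-exponential decay of $(\alpha_r)$ set against $\delta_r=2^{-r}$. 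Given (i)--(iii), rescaling and letting $\delta_r\to0$ along the relevant subsequences yields $e_j\in\clspn\pi(\Lambda)g$ for all $j$, so $\pi(\Lambda)g$ is complete; and $\Lambda$, being a union of finite clusters around a super-rapidly escaping sequence, satisfies $D^-(\Lambda)=0$ and $D^+(\Lambda)<\varepsilon$ (indeed $0$) once the clusters are spaced far enough apart.

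The crux --- and the only place where the hypothesis $D(\Gamma)>1$ is used --- is step (ii): one must simultaneously reconstruct $e_{j_r}$ in ``channel $r$'' and approximately cancel the finitely many dangerous channels $s<r$, using a single coefficient vector supported on $\Lambda_r$. The structural input is that, since $D(\Gamma)>1$, the frame $\{\pi(\mu)\phi\}_{\mu\in\Gamma}$ is strictly overcomplete: the synthesis operator $T\colon\ell^2(\Gamma)\to L^2(\R)$ has infinite-dimensional kernel, equivalently the range of the analysis operator is a \emph{proper} closed subspace of $\ell^2(\Gamma)$ --- hence not invariant under modulation by characters of $\Gamma$. One first picks $c_0$ achieving (i), then corrects it by a coefficient vector whose contribution to channel $r$ vanishes (a transported copy of $\ker T$), chosen to cancel the contributions to the channels $s<r$. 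Passing from channel $r$ to a channel $s$ amounts to translating the coefficients by $\nu_s-\nu_r\in\Gamma$ and twisting by a character of $\Gamma$, and I would then argue, by a genericity/ergodicity argument exploiting the non-invariance above, that for $\nu$'s in sufficiently general position these operations move $\ker T$ into ``general position'', so that the required cancellations in the finitely many dangerous channels are achievable with a $\ker T$-element supported on a large-but-finite $E_r$ --- whose size depends only on the already-fixed data $(e_{j_r},\alpha_r,\delta_r,\nu_1,\dots,\nu_{r-1})$ and on the genericity quality of $\nu_r$, not on $|\nu_r|$. That last point is what lets the recursion close: once the size of $E_r$ is fixed one chooses $|\nu_r|$ enormous, keeping the clusters well separated and $\Lambda$ as sparse as desired. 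The remaining ingredients --- convergence of the series for $g$, the near-orthogonality estimates in (iii), and the density computation for $\Lambda$ --- are routine.
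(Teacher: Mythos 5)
First, a point of reference: the paper does not prove this statement at all --- \Cref{thm:wang} is imported verbatim from Wang's paper [Wa04] and used as a black box in \Cref{prop:optimality}. So your attempt has to stand on its own, and it has two genuine gaps; the first is fatal to the construction as written. Your clusters are $\Lambda_r = E_r - \nu_r$ with $E_r$ a ``large enough finite piece of $\Gamma$ about the origin''. Step (i) forces $E_r$ to be essentially a full lattice patch $\Gamma \cap B_{R_r}(0)$ with $R_r \to \infty$: a family of finite sets of bounded extent cannot span a dense subspace, and your justification of (i) invokes the frame property of the \emph{full} system $\pi(\Gamma)\phi$, i.e.\ all of its elements near the origin. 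Consequently $\Lambda$ contains translated lattice patches $\Gamma \cap B_{R_r}(-\nu_r)$ of unbounded radius. But $D^+(\Lambda) = \limsup_n \sup_x |\Lambda \cap xK_n|/\mu(K_n)$: for every $n$ one can place $xK_n$ inside a patch with $R_r \gg \operatorname{diam} K_n$, where $\Lambda$ coincides with a translate of $\Gamma$, giving $D^+(\Lambda) \geq D(\Gamma) > 1 > \varepsilon$. Spacing the clusters far apart controls $D^-$ (and would control $D^+$ only if the clusters had bounded cardinality), but the upper density sees the densest local configuration at each scale and is blind to the gaps; your parenthetical ``indeed $0$'' confuses $D^+$ with an averaged density. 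To make the theorem true, the approximating coefficient vectors must be supported on subsets of $\Gamma$ that are themselves of upper density $< \varepsilon$ at every scale --- which is essentially the content of the theorem and is where the real work in [Wa04] lies; your scheme presupposes this rather than proving it.

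Second, step (ii), which you correctly identify as the crux and the only place where $D(\Gamma) > 1$ enters, is not an argument but a wish. You assert ``by a genericity/ergodicity argument'' that translated and character-twisted copies of $\ker T$ sit in general position, so that the finitely many dangerous channels $s < r$ can be approximately cancelled by a kernel element supported on $E_r$, with support size and coefficient norm depending only on previously fixed data and not on $|\nu_r|$. None of this is established: there is no proof that the projection of the transported kernel onto the product of the dangerous channels has (approximately) the needed range, and no quantitative control. The uniformity in $|\nu_r|$ is exactly what is required to close the recursion, and it also underlies the tail bound in (iii), which tacitly assumes $\| c \|_{\ell^2} \lesssim \alpha_r^{-1}$ \emph{after} the correction has been added. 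As it stands, the proposal is a plausible scaffold erected around two unproved claims, one of which contradicts the conclusion you are trying to reach.
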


By a slight modification of the example in \Cref{thm:wang}, we show the following:

\begin{proposition} \label{prop:optimality}
For every $ \varepsilon > 0$,  there exists a uniform approximate lattice $\Lambda' \subseteq \R^2$ and $g \in L^2(\R)$ such that $\pi(\Lambda')g$ is complete in $L^2 (\mathbb{R})$ and $D^+(\Lambda') < \varepsilon$ (so that in particular $D^-(\Lambda') < \varepsilon$).
\end{proposition}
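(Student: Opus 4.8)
The plan is to take Wang's construction from \Cref{thm:wang} and perturb the irregular set $\Lambda$ into a genuine uniform approximate lattice without destroying completeness, while keeping the upper density small. The key observation is that \Cref{thm:wang} gives us, for any prescribed $\varepsilon > 0$, a lattice $\Gamma \subseteq \R^2$ with $D(\Gamma) > 1$, a window-free subset $\Lambda \subseteq \Gamma$ and a vector $g \in L^2(\R)$ with $\pi(\Lambda) g$ complete, $D^+(\Lambda) < \varepsilon/2$. The set $\Lambda$ need not contain $0$, need not be symmetric, and certainly need not satisfy an approximate-subgroup condition; so the task is to enlarge it to a set $\Lambda'$ that does, at the cost of only a controlled increase in upper density.

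First I would symmetrize and add the identity: replace $\Lambda$ by $\Lambda_1 := \Lambda \cup (-\Lambda) \cup \{0\}$. Since $\pi(\Lambda)g \subseteq \pi(\Lambda_1)g$ (possibly after adjusting $g$ by the fact that $\pi(-\lambda)g$ spans are not needed — completeness is inherited by supersets), $\pi(\Lambda_1)g$ is still complete, $\Lambda_1$ is symmetric and contains $0$, and $D^+(\Lambda_1) \leq 2 D^+(\Lambda) + 0 < \varepsilon$ because the upper Beurling density is subadditive under finite unions (the density of $\{0\}$ is zero). The remaining issue is the approximate-subgroup axiom (a3): there is no reason $\Lambda_1^2 \subseteq F\Lambda_1$ for a finite $F$. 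The cleanest fix is to pass to a model set inside a lattice. Concretely, since $\Lambda \subseteq \Gamma$ and $\Gamma$ is a lattice in $\R^2$, one can embed $\Gamma$ into a cut-and-project scheme: choose an auxiliary compact abelian group $H$ (for instance a large enough finite cyclic group, or a torus, or the integers modulo a large $N$) together with a lattice $\widetilde\Gamma \leq \R^2 \times H$ projecting injectively to $\R^2$ with image $\Gamma$ and densely to $H$; then $\Lambda$ corresponds to a subset of the $H$-fibers, and by choosing a symmetric compact neighborhood window $W \subseteq H$ that ``sees'' exactly (or contains) the fiber-coordinates of the points of $\Lambda_1$, we obtain a model set $\Lambda' = p_{\R^2}(\widetilde\Gamma \cap (\R^2 \times W))$ which, by \Cref{ex:model_set}, is a uniform approximate lattice, contains $\Lambda_1$, and whose density $D^+(\Lambda') = \mu_H(W)/\covol(\widetilde\Gamma)$ can be made smaller than $\varepsilon$ by shrinking $W$.

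The main obstacle is the tension between two requirements on $W$: it must be large enough that $\Lambda' \supseteq \Lambda_1$ (so that completeness of $\pi(\Lambda')g$ is inherited), yet small enough that $D^+(\Lambda') < \varepsilon$. Resolving this requires a good choice of the auxiliary group $H$ and lattice $\widetilde\Gamma$: one wants the fiber-coordinates $\{h_\lambda : \lambda \in \Lambda_1\} \subseteq H$ to be contained in a small-measure symmetric neighborhood of the identity. Since $\Lambda_1$ has density $< \varepsilon$ relative to $\Gamma$ (i.e., $\Lambda_1$ is a very sparse subset of the lattice $\Gamma$), one can arrange this by taking $H$ to be, say, $\R^2/\Gamma_0$ for a sublattice $\Gamma_0 \subsetneq \Gamma$ of large index and $\widetilde\Gamma = \{(\gamma, \gamma + \Gamma_0) : \gamma \in \Gamma\}$; the fibers of $\Lambda_1$ then land in a union of few cosets, and one shows directly that there is a symmetric compact neighborhood $W$ of small Haar measure containing them — here the flexibility in Wang's construction (one is free to intersect $\Lambda$ with a sublattice of $\Gamma$ from the start, or to choose $\Gamma$ itself adapted) is what makes the argument go through. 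Once $W$ is fixed, the density formula from \Cref{sec:beurling}, monotonicity of $D^+$ under inclusion, and the inheritance of completeness under passing to supersets of the index set complete the proof.
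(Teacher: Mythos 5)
Your first step (symmetrize, add the origin, control $D^+$ by subadditivity) matches the paper, but the second step --- repairing axiom (a3) by embedding $\Lambda_1$ into a model set --- has a genuine gap. The plan requires a symmetric window $W \subseteq H$ of small Haar measure whose preimage captures all fiber-coordinates of $\Lambda_1$. With your suggested scheme $H = \Gamma/\Gamma_0$ (or a torus $\R^2/\Gamma_0$), this amounts to asking that $\Lambda_1$ meet only a small fraction of the cosets of a finite-index sublattice $\Gamma_0 \leq \Gamma$. Nothing in \Cref{thm:wang} gives such control: a set of upper Beurling density $0$ can still meet \emph{every} coset of \emph{every} finite-index sublattice, and the irregular sets produced by Wang's construction ($D^-(\Lambda)=0 \neq D^+(\Lambda)$) cannot in general sit inside a model set of small density, since model sets have uniform density. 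Your proposed escape hatches do not close the gap either: intersecting $\Lambda$ with a sublattice destroys completeness (subsets of complete systems need not be complete), and ``choosing $\Gamma$ adapted'' is not something Wang's theorem offers. Also note that with $H$ a torus your lattice $\widetilde\Gamma$ projects to the finite set $\Gamma/\Gamma_0$, which is not dense in $H$, so the cut-and-project axioms fail; one would have to take $H$ finite discrete, which only sharpens the coset-counting obstruction above.

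The paper sidesteps (a3) entirely via \Cref{lem:relative_dense}: a symmetric subset of a uniform approximate lattice that contains the identity and is \emph{relatively dense} is automatically a uniform approximate lattice. Your $\Lambda_1$ is not relatively dense (its lower density may be $0$), which is exactly what the model-set detour was trying to compensate for. The fix is to adjoin a sparse full lattice rather than to build a window: take $\Gamma = n^{-1}\Z \times \Z$ with $1/n < \varepsilon$, apply \Cref{thm:wang} with a margin $\varepsilon' = (\varepsilon - 1/n)/2$, and set $\Lambda' = \Lambda \cup (-\Lambda) \cup (n\Z \times \Z)$. Then $\Lambda'$ is a symmetric, identity-containing subset of the lattice $\Gamma$ with $D^-(\Lambda') \geq 1/n > 0$, hence relatively dense, so \Cref{lem:relative_dense} applies directly, while subadditivity gives $D^+(\Lambda') < 2\varepsilon' + 1/n = \varepsilon$. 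You were one lemma away; as written, the construction does not go through.
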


\begin{proof}
Let $n > 1$ be an integer such that $1/n < \varepsilon$ and consider the lattice $\Gamma = n^{-1}\Z \times \Z$. Since $D(\Gamma) = n > 1$, \Cref{thm:wang} yields a set $\Lambda \subseteq \Gamma$ and a vector $g \in L^2(\R)$ such that $\pi(\Lambda)g$ is complete, $D^-(\Lambda) = 0$ and $D^+(\Lambda) < \varepsilon' := (\varepsilon - 1/n)/2$. Set
\[ \Lambda' = \Lambda \cup (-\Lambda) \cup (n\Z \times \Z) .\]
Then $\Lambda'$ is symmetric, contains the identity, and is a subset of $\Gamma$. Using the subadditivity of the upper Beurling density, we obtain the following chain of inequalities:
\begin{align*}
    0 &< \frac{1}{n} = D^-(n\Z \times \Z) \leq D^-(\Lambda') \leq D^+(\Lambda') \\
    &\leq D^+(\Lambda) + D^+(-\Lambda) + D^+(n\Z \times \Z) \\
    &< 2 \varepsilon' + \frac{1}{n} = \varepsilon .
\end{align*}
The positivity of $D^-(\Lambda')$ is equivalent to the relative density of $\Lambda'$ in $G$. Thus, $\Lambda'$ is a uniform approximate lattice by \Cref{lem:relative_dense}.
\end{proof}

\subsection{Uniformly minimal systems} \label{sec:uniformly_minimal}

A coherent system $\pi(\Lambda) g$ is called \emph{minimal} if
\[ \pi(\lambda)g \notin \clspn \{ \pi(\lambda')g : \lambda' \in \Lambda \setminus \{ \lambda \} \} \]
for every $\lambda \in \Lambda$. Equivalently,  $\pi(\Lambda) g$ admits a \emph{bi-orthogonal system}, i.e.,\ a sequence $(h_\lambda )_{\lambda \in \Lambda}$ such that $\langle \pi(\lambda)g, h_{\lambda'} \rangle = \delta_{\lambda,\lambda'}$ for every $\lambda,\lambda' \in \Lambda$.

A stronger property is the following.

\begin{definition}
The coherent system $\pi(\Lambda) g$ is called \emph{uniformly minimal} if there exists $\delta > 0$ such that
\[ \dist\Big( \pi(\lambda)g, \, \clspn \{ \pi(\lambda')g : \lambda' \in \Lambda \setminus \{ \lambda \} \} \Big) \geq \delta \]
for every $\lambda \in \Lambda$.
\end{definition}

The uniform minimality of $\pi(\Lambda) g$ is equivalent to the existence of a \emph{bounded} bi-orthogonal system, i.e.,\ a bi-orthogonal system $(h_\lambda )_{\lambda \in \Lambda}$ such that $\sup_{\lambda \in \Lambda} \| h_\lambda \| < \infty$, see, e.g.,\ \cite[Lemma 6]{ahn2018density}.

\begin{lemma}\label{lem:uniform_minimality_separated}
If $\pi(\Lambda)g$ is uniformly minimal, then $\Lambda$ is uniformly discrete.
\end{lemma}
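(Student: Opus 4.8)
The plan is to deduce uniform discreteness of $\Lambda$ from the existence of a \emph{bounded} bi-orthogonal system together with the orthogonality relations \eqref{eq:ortho}, in a way that mirrors the density computation in the proof of \Cref{thm:HAP_density}. Concretely, since $\pi(\Lambda)g$ is uniformly minimal, fix $\delta > 0$ and a bi-orthogonal system $(h_\lambda)_{\lambda \in \Lambda}$ with $C := \sup_{\lambda \in \Lambda}\|h_\lambda\| < \infty$; I may also assume $g \neq 0$, since otherwise no bi-orthogonal system can exist (so the statement is vacuous). The key quantitative input is that for each $\lambda$, the reproducing identity $\langle \pi(\lambda)g, h_\lambda\rangle = 1$ forces a lower bound on how much mass the function $x \mapsto \langle \pi(x)g, h_\lambda\rangle$ carries near $x = \lambda$, uniformly in $\lambda$.

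First I would record, using \eqref{eq:ortho} applied to $f = h_\lambda$, that $\int_G |\langle \pi(x)g, h_\lambda\rangle|^2\, d\mu(x) = d_\pi^{-1}\|g\|^2\|h_\lambda\|^2 \leq d_\pi^{-1}\|g\|^2 C^2$, a bound independent of $\lambda$. Next, by strong continuity of $\pi$ there is a compact symmetric unit neighborhood $V \subseteq G$ such that $\|\pi(v)g - g\| < \delta^2/(4C)$ for all $v \in V$ — here I am being schematic about the exact constant; the point is to make $|\langle \pi(\lambda v)g, h_\lambda\rangle - \langle \pi(\lambda)g, h_\lambda\rangle|$ small relative to $1$, so that $|\langle \pi(y)g, h_\lambda\rangle| \geq 1/2$, say, for all $y \in \lambda V$. (One must be slightly careful with the cocycle, but $|\langle \pi(\lambda v)g, h_\lambda\rangle| = |\langle \sigma(\lambda,v)\pi(\lambda)\pi(v)g, h_\lambda\rangle|$ and the phase $\sigma(\lambda,v)$ drops out of the modulus after writing $\pi(\lambda)\pi(v)g = \pi(\lambda)g + \pi(\lambda)(\pi(v)g - g)$ and applying the triangle inequality.) Consequently $\int_{\lambda V}|\langle \pi(y)g, h_\lambda\rangle|^2\,d\mu(y) \geq \tfrac14\,\mu(V)$ for every $\lambda \in \Lambda$.

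Now I would shrink $V$ further to a symmetric unit neighborhood $V_0$ with $V_0^2 \subseteq V$ and argue that the translates $\lambda V_0$, $\lambda \in \Lambda$, cannot overlap too much: if some point lay in $N$ distinct sets $\lambda_1 V_0, \dots, \lambda_N V_0$, then all the $\lambda_i V_0$ would be contained in a single translate $x V$ for a suitable $x$, and summing the lower bound above over $i = 1, \dots, N$ against the upper bound from the $L^2$-orthogonality relations (applied with $f$ running over an orthonormal set, exactly as in the proof of \Cref{thm:HAP_density}, or more simply by noting $\sum_i \int_{xV}|\langle \pi(y)g,h_{\lambda_i}\rangle|^2 \leq \sum_i d_\pi^{-1}\|g\|^2\|h_{\lambda_i}\|^2$ is not quite enough and one instead integrates $\sum_i |\langle \pi(y)g, h_{\lambda_i}\rangle|^2$ and uses that this is bounded by a dimension-type count) yields $N \leq 4 d_\pi^{-1}\|g\|^2 C^2 \mu(V)/\mu(V_0)$, a bound independent of the point. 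This gives bounded overlap, and bounded overlap of the $\lambda V_0$ is exactly a weak form of what is needed; to get genuine uniform discreteness ($|\Lambda \cap xV_0| \leq 1$ for some unit neighborhood) one instead argues directly that two points $\lambda \neq \lambda'$ with $\lambda^{-1}\lambda' \in V_0$ would make $\pi(\lambda')g$ very close to $\pi(\lambda)g$, contradicting $\dist(\pi(\lambda')g, \clspn\{\pi(\mu)g : \mu \neq \lambda'\}) \geq \delta$ once $\|\pi(v)g - g\| < \delta$ for $v \in V_0$. The main obstacle is bookkeeping: getting a clean uniform-in-$\lambda$ estimate while correctly handling the $2$-cocycle phases and choosing the neighborhoods in the right order; the representation-theoretic content is light, and the cleanest route is almost surely the last one — purely using the defining inequality of uniform minimality plus strong continuity — rather than the orthogonality relations, which are overkill here.
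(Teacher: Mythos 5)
Your concluding argument is exactly the paper's proof: pick a symmetric unit neighborhood $V_0$ with $\|\pi(v)g-g\|<\delta$ for $v\in V_0^2$ (equivalently, a $U$ with $V_0^2\subseteq U$), and observe that two distinct $\lambda,\lambda'\in\Lambda\cap xV_0$ would place $\pi(\lambda')g$ within distance $\delta$ of the line spanned by $\pi(\lambda)g$ — the cocycle only contributes a unimodular scalar, which is absorbed by taking the distance to the span — contradicting the uniform minimality inequality. You correctly identify this as the right route, and it is complete modulo routine bookkeeping. The long preliminary detour through the orthogonality relations and the bounded bi-orthogonal system should simply be discarded: as you yourself note, at best it yields bounded overlap of the translates $\lambda V_0$ (i.e.\ a finite upper bound on $|\Lambda\cap xV_0|$), which is strictly weaker than uniform discreteness, and even that step is not closed, since bounding $N$ requires a Bessel-type control on $\sum_i|\langle \pi(y)g,h_{\lambda_i}\rangle|^2$ that a merely bounded bi-orthogonal system does not provide. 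So: keep the last four lines, delete the rest.
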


\begin{proof}
Let $\delta > 0$ be such that
\[ \dist\Big( \pi(\lambda)g, \, \clspn \{ \pi(\lambda')g : \lambda' \in \Lambda \setminus \{ \lambda \} \} \Big) \geq \delta \]
for every $\lambda \in \Lambda$. By the strong continuity of $\pi$, there exists a unit neighborhood $U$ such that $\| \pi(x)g - \pi(x')g \| < \delta$ whenever $x^{-1}x' \in U$. Let $V$ be a symmetric neighborhood of the identity with $V^2 \subseteq U$. Assume towards a contradiction that there exist distinct $\lambda,\lambda' \in \Lambda \cap xV$ for some $x \in G$. Then $(x^{-1}\lambda)^{-1}(x^{-1}\lambda') = \lambda^{-1}\lambda' \in V^2 \subseteq U$, so $\| \pi(\lambda)g - \pi(\lambda')g \| < \delta$. But since $\lambda' \in \Lambda \setminus \{ \lambda \}$ we also get
\[ \| \pi(\lambda) g - \pi(\lambda')g \| \geq \dist\Big( \pi(\lambda)g, \, \clspn \{ \pi(\lambda')g : \lambda' \in \Lambda \setminus \{ \lambda \} \} \Big) \geq \delta , \]
a contradiction.
\end{proof}

\begin{theorem} \label{thm:uniformly_minimal}
Let $\Lambda \subseteq G$ and suppose that $\pi(\Lambda) g$ is uniformly minimal for some $g \in \Hpi$. Then 
\[
D^+ (\Lambda) \leq d_{\pi}.
\]
\end{theorem}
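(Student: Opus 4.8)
The plan is to mirror the argument for \Cref{thm:HAP_density}, but replacing the HAP with uniform minimality and reversing the direction of the inequality: instead of showing that the subspaces $V_{x,n}$ are large because they approximate the representation orbit, I would show that these subspaces cannot be \emph{too} large because the bounded bi-orthogonal system controls the dimension from above. First I would record, using \Cref{lem:uniform_minimality_separated}, that $\Lambda$ is uniformly discrete and in particular locally finite, so that for a strong F\o lner sequence $(K_n)_{n \in \N}$ and $x \in G$ the set $\Lambda \cap x K_n$ is finite; write $V_{x,n} = \spn\{\pi(\lambda)g : \lambda \in \Lambda \cap x K_n\}$. The key point is that $\pi(\Lambda)g$ being uniformly minimal means there is a bounded bi-orthogonal system $(h_\lambda)_{\lambda \in \Lambda}$ with $M := \sup_\lambda \|h_\lambda\| < \infty$ and $\langle \pi(\lambda)g, h_{\lambda'}\rangle = \delta_{\lambda,\lambda'}$.

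The main step is the dimension estimate: for each $x$ and $n$, I want $\dim V_{x,n} \le \mu(K_n K)$ up to lower-order corrections, where $K$ is a fixed compact unit neighborhood. The cleanest route is to count via the orthogonality relations \eqref{eq:ortho} rather than via linear algebra directly. Fix a unit vector $\eta$; then for the orthogonal projection $P$ onto $V_{x,n}$ one has $\dim V_{x,n} = d_\pi \int_G \|P\pi(y)\eta\|^2\, d\mu(y)$, exactly as in the proof of \Cref{thm:HAP_density}. So it suffices to bound $\int_G \|P\pi(y)\eta\|^2\, d\mu(y)$ from above by (essentially) $\mu(x K_n) = \mu(K_n)$. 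For $y$ far outside $x K_n$ — precisely, $y \notin x K_n K^{-1}$ for a suitable $K$ — I would argue that $\pi(y)\eta$ is nearly orthogonal to $V_{x,n}$; indeed each generator $\pi(\lambda)g$ with $\lambda \in \Lambda \cap x K_n$ satisfies $\lambda \in x K_n$, so $\lambda^{-1}y \notin K^{-1}$, i.e.\ $y^{-1}\lambda \notin K$, and for $K$ large the matrix coefficient $\langle \pi(y)\eta, \pi(\lambda)g\rangle = \langle \eta, \pi(y^{-1}\lambda)g\rangle \cdot (\text{cocycle})$ is small. The subtlety is that $V_{x,n}$ is spanned by \emph{many} such generators, so pointwise smallness of individual coefficients is not enough — I need to use the bi-orthogonal system together with uniform discreteness to control the number of generators "active" near a given $y$, and then sum. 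Concretely: expand $P\pi(y)\eta = \sum_{\lambda \in \Lambda \cap xK_n} c_\lambda(y) \pi(\lambda)g$ in the frame-like expansion coming from the bi-orthogonal system (valid on the finite-dimensional space $V_{x,n}$), so $\|P\pi(y)\eta\|^2 \le C \sum_\lambda |\langle \pi(y)\eta, h_\lambda\rangle|^2$ with $C$ depending on $M$ and $\|g\|$; then integrate over $G$, interchange sum and integral, and apply \eqref{eq:ortho} to each term to get $\int_G \|P\pi(y)\eta\|^2 d\mu(y) \le C d_\pi^{-1} |\Lambda \cap x K_n|$ — wait, this goes the wrong way.

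So instead the right move, following \cite{ahn2018density}, is the \emph{dual} counting: use that the bi-orthogonal family, being bounded, yields via \eqref{eq:ortho} that $\sum_{\lambda \in \Lambda \cap xK_n} \mathbbm{1}$ is comparable to $d_\pi \mu$ of a slightly fattened version of $xK_n$, by integrating $\|\pi(y)\eta\|^2$ over the region where \emph{some} generator is close. Precisely, I would fix $\veps \in (0,1)$, choose $K$ compact with $\int_{G \setminus K}|\langle \eta, \pi(z)g\rangle|^2 d\mu(z) < \veps$ (possible by square-integrability of the matrix coefficient — here I need $g \ne 0$, which holds, and I may normalize $\|g\|=1$), and show that for each $\lambda \in \Lambda$ the "capture" set $A_\lambda = \{y : |\langle \pi(y)\eta, h_\lambda\rangle| > \text{threshold}\}$ has $\mu(A_\lambda)$ bounded below by a positive constant (using $\|h_\lambda\| \ge$ a constant, which follows from bi-orthogonality: $1 = \langle \pi(\lambda)g, h_\lambda\rangle \le \|g\|\|h_\lambda\|$ so $\|h_\lambda\| \ge 1$), while these sets have bounded overlap because $h_\lambda$ "lives near $\lambda$" up to an error controlled by $K$. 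Summing and using left-invariance gives $|\Lambda \cap xK_n| \cdot (\text{const}) \le \mu(xK_n K) = \mu(K_n K)$, hence $\sup_x |\Lambda \cap x K_n|/\mu(K_n) \le d_\pi + o(1)$ by the strong F\o lner property $\mu(K_n K)/\mu(K_n) \to 1$, and taking $\limsup$ followed by $\veps \to 0$ yields $D^+(\Lambda) \le d_\pi$.

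\textbf{The main obstacle.} The hard part is making precise the "bounded overlap" of the localization regions of the $h_\lambda$'s — i.e.\ converting the \emph{abstract} boundedness $\sup_\lambda\|h_\lambda\| < \infty$ into a \emph{quantitative spatial localization} of the bi-orthogonal vectors against the representation, uniformly in $\lambda$ and in the base point $x$. In the Gabor setting of \cite{ahn2018density} this is handled by explicit time-frequency concentration estimates; in the present generality one must instead run the argument entirely through the orthogonality relations \eqref{eq:ortho}, integrating $|\langle \pi(y)\eta, h_\lambda\rangle|^2$ over $G$ (which equals $d_\pi^{-1}\|\eta\|^2\|h_\lambda\|^2$, a quantity bounded \emph{below} by $d_\pi^{-1}$ and \emph{above} by $d_\pi^{-1}M^2$), and combining with the near-orthogonality of $\pi(y)\eta$ to $V_{x,n}$ for $y$ outside a fattened copy of $xK_n$. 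Getting the bookkeeping right so that exactly a factor $\mu(K_n K)/\mu(K_n) \to 1$ appears — and no spurious multiplicative constant survives that would weaken $d_\pi$ to $Cd_\pi$ — is the delicate point, and it is where the boundedness of the bi-orthogonal system (as opposed to mere minimality) is essential.
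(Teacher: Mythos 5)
Your ingredients are the right ones --- \Cref{lem:uniform_minimality_separated} to get local finiteness of $\Lambda$, the bounded bi-orthogonal system $(h_\lambda)_{\lambda\in\Lambda}$ with $B := \sup_\lambda \|h_\lambda\|^2 < \infty$, a compact symmetric unit neighborhood $K$ with $\int_{G\setminus K}|\langle g,\pi(y)\eta\rangle|^2\,d\mu(y) \le \varepsilon^2$, and the strong F\o lner property $\mu(K_nK)/\mu(K_n)\to 1$ --- but the argument is not closed: the ``capture set / bounded overlap'' mechanism in your final paragraph is precisely the point you leave open, and it is also not the right mechanism. As you yourself worry, thresholding the sets $A_\lambda$ and bounding their overlap would at best produce a multiplicative constant depending on the threshold and on $B$, yielding $D^+(\Lambda)\le C d_\pi$ rather than the sharp bound. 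No quantitative spatial localization of the $h_\lambda$ is needed at all.

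The missing idea is a single exact bilinear identity. Let $P$ be the orthogonal projection onto $V_{x,n} = \spn\{\pi(\lambda)g : \lambda\in\Lambda\cap xK_n\}$; then $(\pi(\lambda)g)_{\lambda\in\Lambda\cap xK_n}$ and $(Ph_\lambda)_{\lambda\in\Lambda\cap xK_n}$ are bi-orthogonal inside $V_{x,n}$, so $P\pi(y)\eta=\sum_{\lambda\in\Lambda\cap xK_n}\langle\pi(y)\eta,Ph_\lambda\rangle\,\pi(\lambda)g$ and hence
\[
\sum_{\lambda\in\Lambda\cap xK_n}\langle\pi(\lambda)g,\pi(y)\eta\rangle\,\langle\pi(y)\eta, Ph_\lambda\rangle = \langle P\pi(y)\eta,\pi(y)\eta\rangle = \|P\pi(y)\eta\|^2\le 1 .
\]
Integrating this over $xK_nK$ bounds the sum of the truncated integrals by $\mu(K_nK)$. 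Integrating each term over all of $G$ gives \emph{exactly} $d_\pi^{-1}\langle\pi(\lambda)g,Ph_\lambda\rangle = d_\pi^{-1}$ by the polarized orthogonality relations \eqref{eq:ortho} --- this is the counting step, with no spurious constant. The difference, i.e.\ each term integrated over $G\setminus xK_nK$, is controlled by Cauchy--Schwarz: one factor is $\|\langle Ph_\lambda,\pi(\cdot)\eta\rangle\|_{L^2}=(d_\pi^{-1}\|Ph_\lambda\|^2)^{1/2}\le d_\pi^{-1/2}B^{1/2}$, and the other is $\big(\int_{G\setminus \lambda^{-1}xK_nK}|\langle g,\pi(z)\eta\rangle|^2\,d\mu(z)\big)^{1/2}\le\varepsilon$, since $\lambda\in xK_n$ forces $K\subseteq\lambda^{-1}xK_nK$. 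Altogether $d_\pi^{-1}|\Lambda\cap xK_n|\le\mu(K_nK)+d_\pi^{-1/2}B^{1/2}\varepsilon\,|\Lambda\cap xK_n|$; dividing by $\mu(K_n)$, letting $n\to\infty$ and then $\varepsilon\to0$ gives $D^+(\Lambda)\le d_\pi$. Note that your first, abandoned attempt was closer than you thought: the misstep was replacing the bilinear pairing by the quadratic bound $\|P\pi(y)\eta\|^2\le C\sum_\lambda|\langle\pi(y)\eta,h_\lambda\rangle|^2$; keeping the exact bilinear form is what keeps the constant equal to $1$.
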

\begin{proof}
Let $\eta \in \Hpi$ be a unit vector and define $V_{\eta} : \Hpi \to L^2 (G)$ by $V_\eta f = \langle f, \pi (\cdot) \eta \rangle$. 
Fix $\varepsilon > 0$ and choose a compact symmetric unit neighborhood $K \subseteq G$  such that 
\[
\int_{G\setminus K} |V_{\eta} g (y)|^2 \; d\mu(y) \leq \varepsilon^2.
\]
Let $(K_n)_{n \in \mathbb{N}}$ be a strong F\o lner sequence. For fixed $x \in G$ and $n \in \mathbb{N}$, let \[ V_{x,n} := \Span \big\{ \pi(\lambda) g : \lambda \in \Lambda \cap x K_n \big\}. \]
Then, by \Cref{lem:uniform_minimality_separated}, the space $V_{x,n}$ is finite-dimensional since $\Lambda$ is uniformly discrete (in particular, locally finite). Since $\pi(\Lambda) g$ is uniformly minimal, there exists a bi-orthogonal system $(h_{\lambda})_{\lambda \in \Lambda}$ in $\Hpi$ satisfying $B := \sup_{\lambda \in \Lambda} \| h_{\lambda} \|_{\Hpi}^2 < \infty$. 
If $P$ denotes the projection onto $V_{x,n}$, then $\pi(\Lambda \cap x K_n ) g$ and $( P h_{\lambda} )_{\lambda \in \Lambda \cap x K_n }$ are bi-orthogonal, 
since $\langle \pi(\lambda) g, P h_{\lambda'} \rangle = \langle \pi(\lambda) g, h_{\lambda'} \rangle = \delta_{\lambda, \lambda'}$ for $\lambda, \lambda' \in \Lambda \cap x K_n$. Therefore, for $y \in G$,
\begin{align*}
\sum_{\lambda \in \Lambda \cap x K_n} (V_{\eta} \pi(\lambda) g) (y) \overline{(V_{\eta} P h_{\lambda})(y)} &= \sum_{\lambda \in \Lambda \cap  x K_n} \langle \pi(\lambda) g , \pi(y) \eta \rangle \langle  \pi(y) \eta , P h_{\lambda} \rangle \\
&= \bigg \langle \sum_{\lambda \in \Lambda \cap x K_n} \langle \pi(y) \eta, P h_{\lambda} \rangle \pi(\lambda) g , \; \pi(y) \eta \bigg \rangle \\
&= \big \langle P \pi (y) \eta, \pi(y) \eta \big \rangle \\
&= \| P \pi(y) \eta \|^2_{\Hpi}.
\end{align*}
 Integrating this identity over $x K_n K$ gives
\begin{align*}
\sum_{\lambda \in \Lambda \cap x K_n} \int_{x K_n K} (V_{\eta} \pi(\lambda) g) (y) \overline{(V_{\eta} P h_{\lambda})(y)} \; d\mu(y) &= \int_{x K_n K} \| P \pi (y) \eta \|^2_{\Hpi} \; d\mu(y) \\
&\leq \mu(K_n K). 
\end{align*}
Note that the orthogonality relations \eqref{eq:ortho} yield
\[
\int_G (V_{\eta} \pi(\lambda) g) (y) \overline{(V_{\eta} P h_{\lambda})(y)} \; d\mu(y) = d_{\pi}^{-1} \langle \eta, \eta \rangle \langle \pi(\lambda ) g , P h_{\lambda} \rangle = d_{\pi}^{-1}. 
\]
This, together with an application of the triangle inequality, gives
\begin{align*}
    \sum_{\lambda \in \Lambda \cap x K_n} d_{\pi}^{-1} &\leq \bigg| \sum_{\lambda \in \Lambda \cap x K_n} \int_{x K_n K} (V_{\eta} \pi(\lambda) g) (y) \overline{(V_{\eta} P h_{\lambda})(y)} \; d\mu(y) \bigg| \\
    &\quad \quad \quad + \bigg| \sum_{\lambda \in \Lambda \cap x K_n} \int_{G \setminus x K_n K} (V_{\eta} \pi(\lambda) g) (y) \overline{(V_{\eta} P h_{\lambda})(y)} \; d\mu(y) \bigg| \\
    &\leq \mu(K_n K) +  \sum_{\lambda \in \Lambda \cap x K_n} \bigg| \int_{G \setminus x K_n K} (V_{\eta} \pi(\lambda) g) (y) \overline{(V_{\eta} P h_{\lambda})(y)} \; d\mu(y) \bigg|,
\end{align*}
which implies that
\begin{align} \label{eq:reverse_triangle}
   d_{\pi}^{-1} |\Lambda \cap x K_n| - \sum_{\lambda \in \Lambda \cap x K_n} \bigg| \int_{G \setminus x K_n K} (V_{\eta} \pi(\lambda) g) (y) \overline{(V_{\eta} P h_{\lambda})(y)} \; d\mu(y) \bigg| \leq \mu(K_n K). 
\end{align}
Since $\| V_{\eta} P {h_{\lambda}} \|_{L^2}^2 = d_{\pi}^{-1} \| P h_{\lambda} \|_{\Hpi}^2 \leq d_{\pi}^{-1} B$, each summand on the left-hand side can be estimated using Cauchy--Schwarz inequality as
\begin{align*}
\bigg|  \int_{G \setminus x K_n K} (V_{\eta} \pi(\lambda) g) (y) \overline{(V_{\eta} P h_{\lambda})(y)} \; d\mu(y) \bigg| &\leq \big\| V_{\eta} P h_{\lambda} \big\|_{L^2}  \bigg( \int_{G \setminus x K_n K} |V_{\eta} g( \lambda^{-1} y)|^2 \; d\mu(y) \bigg)^{1/2} \\
&\leq d_{\pi}^{-1/2} B^{1/2} \bigg( \int_{G \setminus \lambda^{-1} x K_n K} |V_{\eta} g(y)|^2 \; d\mu(y) \bigg)^{1/2} \\
&\leq d_{\pi}^{-1/2} B^{1/2} \bigg( \int_{G \setminus K} |V_{\eta} g(y)|^2 \; d\mu(y) \bigg)^{1/2} \\
&\leq d_{\pi}^{-1/2} B^{1/2} \varepsilon, \numberthis \label{eq:penultimate}
\end{align*}
where the penultimate inequality used that $G \setminus \lambda^{-1} x K_n K \subseteq G \setminus K$ as $\lambda \in x K_n$.
Combining inequalities \eqref{eq:reverse_triangle} and \eqref{eq:penultimate} gives
\[
d_{\pi}^{-1}  \frac{|\Lambda \cap  x K_n |}{\mu(K_n)} - d_{\pi}^{-1/2} B^{1/2} \varepsilon \frac{ |\Lambda \cap x K_n |}{\mu(K_n)} \leq \frac{\mu(K_n K)}{\mu(K_n)}.
\]
As in the proof of \Cref{thm:density_approx}, it follows that $\mu(K_n K) / \mu(K_n) \to 1$ as $n \to \infty$, 
and hence
\[
( 1 -  d_{\pi}^{1/2} B^{1/2} \varepsilon ) D^{+} (\Lambda)  \leq d_{\pi}.
\]
Since $\varepsilon > 0$ was chosen arbitrary, this completes the proof.
\end{proof}

The proof of the previous theorem follows the corresponding proofs in \cite{ahn2018density, mitkovski2020density} closely, while extending it to possibly exponential growth groups.

\subsection{Minimal systems} \label{sec:minimal}

\begin{lemma} \label{lem:minimal_uniform}
If $\Lambda \subseteq G$ is a symmetric discrete set and $\pi(\Lambda^2) g$ is minimal, then $\pi(\Lambda) g$
is uniformly minimal. 
\end{lemma}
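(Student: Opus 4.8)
The plan is to show that the minimality of $\pi(\Lambda^2)g$ yields a uniform positive lower bound on the distances $\dist(\pi(\lambda)g, \clspn\{\pi(\lambda')g : \lambda' \in \Lambda\setminus\{\lambda\}\})$ for $\lambda \in \Lambda$, exploiting the fact that the relevant subspaces are all translates of each other. First I would observe that since $\pi(\Lambda^2)g$ is minimal, it admits a bi-orthogonal system $(h_{\gamma})_{\gamma\in\Lambda^2}$ with $\langle \pi(\gamma)g, h_{\gamma'}\rangle = \delta_{\gamma,\gamma'}$. In particular, taking $\gamma = e \in \Lambda^2$ (note $e \in \Lambda \subseteq \Lambda^2$ since $\Lambda$ is a symmetric set containing the identity — here I would use that $\Lambda$ being symmetric and discrete with $\pi(\Lambda^2)g$ minimal forces $e \in \Lambda$; more carefully, one uses $e \in \Lambda$ which holds in all intended applications, or simply works with $\Lambda \cup\{e\}$), there is a vector $h := h_e \in \Hpi$ with $\langle g, h\rangle = 1$ and $\langle \pi(\gamma)g, h\rangle = 0$ for all $\gamma \in \Lambda^2\setminus\{e\}$.

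The key step is then a translation argument. Fix $\lambda \in \Lambda$ and set $h_\lambda := \pi(\lambda)h$. For any $\lambda' \in \Lambda\setminus\{\lambda\}$, we have $\lambda^{-1}\lambda' \in \Lambda^{-1}\Lambda = \Lambda^2$ (using symmetry of $\Lambda$) and $\lambda^{-1}\lambda' \neq e$, so that, using the cocycle identity $\pi(\lambda)\pi(\lambda^{-1}\lambda') = \sigma(\lambda,\lambda^{-1}\lambda')\pi(\lambda')$,
\[
\langle \pi(\lambda')g, \pi(\lambda)h\rangle = \overline{\sigma(\lambda,\lambda^{-1}\lambda')}\,\langle \pi(\lambda)\pi(\lambda^{-1}\lambda')g, \pi(\lambda)h\rangle = \overline{\sigma(\lambda,\lambda^{-1}\lambda')}\,\langle \pi(\lambda^{-1}\lambda')g, h\rangle = 0,
\]
since $|\sigma| \equiv 1$ and $\pi(\lambda)$ is unitary. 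Similarly $\langle \pi(\lambda)g, \pi(\lambda)h\rangle = \langle g, h\rangle = 1$. Hence $\pi(\lambda)h$ is orthogonal to $\clspn\{\pi(\lambda')g : \lambda'\in\Lambda\setminus\{\lambda\}\}$ while pairing to $1$ with $\pi(\lambda)g$. This gives the standard lower bound
\[
\dist\Big(\pi(\lambda)g,\ \clspn\{\pi(\lambda')g : \lambda'\in\Lambda\setminus\{\lambda\}\}\Big) \geq \frac{|\langle \pi(\lambda)g, \pi(\lambda)h\rangle|}{\|\pi(\lambda)h\|} = \frac{1}{\|h\|},
\]
since $\|\pi(\lambda)h\| = \|h\|$ by unitarity. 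As $\|h\|$ is independent of $\lambda$, this establishes uniform minimality with $\delta = 1/\|h\|$.

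I expect the main (and really only) obstacle to be the bookkeeping around whether $e \in \Lambda$ and the indexing of the bi-orthogonal system — one needs $e \in \Lambda^2$ to extract the single vector $h$, and one wants $e \in \Lambda$ so that $\lambda^{-1}\lambda'$ genuinely ranges over $\Lambda^2\setminus\{e\}$ as $\lambda'$ ranges over $\Lambda\setminus\{\lambda\}$ with $\lambda$ fixed; since $\Lambda$ is symmetric this is automatic once $e \in \Lambda$, and in the statement's intended context ($\Lambda$ a symmetric discrete set used as a stepping stone toward approximate lattices, which always contain $e$) this is harmless, but I would state it cleanly, perhaps by noting $e\in\Lambda$ can be assumed without loss since otherwise $\pi(\Lambda^2)g$ minimal already forces the conclusion vacuously or one replaces $\Lambda$ by $\Lambda\cup\{e\}$. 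The cocycle factors $\sigma(\lambda,\lambda^{-1}\lambda')$ cause no trouble because they are unimodular and drop out of all norms and absolute values. Everything else is a routine application of the elementary fact that a functional biorthogonal to a spanning family bounds the distance from below.
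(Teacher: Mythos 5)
Your proof is correct and follows essentially the same route as the paper: extract the single bi-orthogonal vector $h$ dual to $\pi(e)g$ from the minimality of $\pi(\Lambda^2)g$, translate it by $\pi(\lambda)$, and check via the cocycle identity that $(\pi(\lambda)h)_{\lambda\in\Lambda}$ is a bounded bi-orthogonal system for $\pi(\Lambda)g$. Your worry about whether $e\in\Lambda$ is unnecessary: the argument only needs $e\in\Lambda^2$, which is automatic for a nonempty symmetric $\Lambda$ since $\lambda\lambda^{-1}=e$, and for $\lambda\neq\lambda'$ in $\Lambda$ one always has $\lambda^{-1}\lambda'\in\Lambda^2\setminus\{e\}$ regardless of whether $e$ lies in $\Lambda$ itself.
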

\begin{proof}
Since $\pi(\Lambda^2) g$ is minimal, there exists $h \in \Hpi$ such that $\langle \pi(\lambda) g, h \rangle = \delta_{\lambda, e}$ for $\lambda \in \Lambda^2$. Let $\lambda_1,\lambda_2 \in \Lambda$. Since  $\lambda_2^{-1}\lambda_1 \in \Lambda^{-1} \Lambda = \Lambda^2$, a direct calculation gives
\begin{align*}
\big \langle \pi(\lambda_1) g, \pi(\lambda_2) h \big \rangle 
&= \overline{\sigma(\lambda_2, (\lambda_2)^{-1}) } \sigma((\lambda_2)^{-1}, \lambda_1) \big \langle \pi((\lambda_2)^{-1} \lambda_1) g, h \big \rangle \\
&=  \overline{\sigma(\lambda_2, (\lambda_2)^{-1}) } \sigma((\lambda_2)^{-1}, \lambda_1) \; \delta_{\lambda_2,\lambda_1} .
\end{align*}
Hence, $\langle \pi(\lambda_1) g, \pi(\lambda_2) h \big \rangle = 0$ for $\lambda_1 \neq \lambda_2$. If $\lambda_1 = \lambda_2$, then 
\[ \overline{\sigma(\lambda_2, (\lambda_2)^{-1}) } \sigma((\lambda_2)^{-1}, \lambda_1)  = \overline{\sigma(\lambda_2, (\lambda_2)^{-1}) }  \sigma(\lambda_2, (\lambda_2)^{-1}) = 1, \]
so that $\langle \pi(\lambda_1) g, \pi(\lambda_2) h \big \rangle = 1$. This shows that $\pi(\Lambda) g$ and $\pi(\Lambda) h$ are bi-orthogonal, which implies that $\pi (\Lambda) g$ is uniformly minimal. 
\end{proof}

\begin{theorem}
Let $\Lambda \subseteq G$ be a uniform $k$-approximate lattice. If $\pi(\Lambda^2) g$ is minimal, then 
\[
D^+(\Lambda^2) \leq d_{\pi} k.
\]
In particular, if $\Lambda \leq G$ is a uniform lattice, then $D^+(\Lambda) \leq d_{\pi}$. 
\end{theorem}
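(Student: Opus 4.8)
The plan is to combine \Cref{lem:minimal_uniform} with \Cref{thm:uniformly_minimal} to control $D^+(\Lambda)$, and then pass from $\Lambda$ to $\Lambda^2$ using the approximate-group structure, exactly mirroring the density-transfer estimate already used in the proof of \Cref{thm:density_approx}. First I would note that since $\Lambda$ is a uniform $k$-approximate lattice, it is in particular a symmetric discrete set, so \Cref{lem:minimal_uniform} applies: minimality of $\pi(\Lambda^2)g$ implies that $\pi(\Lambda)g$ is uniformly minimal. \Cref{thm:uniformly_minimal} then yields $D^+(\Lambda) \leq d_{\pi}$.

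Next I would fix a finite set $F \subseteq G$ with $|F| \leq k$ and $\Lambda^2 \subseteq \Lambda F$, and reduce the claim to the sub-multiplicativity estimate $D^+(\Lambda F) \leq |F| D^+(\Lambda)$. This is the upper-density analogue of the inequality $D^-(\Lambda F) \leq |F| D^-(\Lambda)$ proved inside \Cref{thm:density_approx}, and I expect the argument to be essentially identical: writing $\Lambda F = \bigcup_{y \in F} \Lambda y$, for a strong Følner sequence $(K_n)$ and a compact symmetric unit neighborhood $K \supseteq F$ one has $|\Lambda y \cap x K_n| = |\Lambda \cap x K_n y^{-1}| \leq |\Lambda \cap x K_n K|$ for each $y \in F$; summing over $y \in F$, dividing by $\mu(K_n)$, and taking $\limsup_n \sup_{x}$ gives
\[
D^+(\Lambda F) \leq |F| \limsup_{n \to \infty} \sup_{x \in G} \frac{|\Lambda \cap x K_n K|}{\mu(K_n K)} \cdot \frac{\mu(K_n K)}{\mu(K_n)} \leq |F| D^+(\Lambda),
\]
using once more that $\mu(K_n K)/\mu(K_n) \to 1$ for a strong Følner sequence. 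Chaining the estimates gives $D^+(\Lambda^2) \leq D^+(\Lambda F) \leq |F| D^+(\Lambda) \leq k d_{\pi}$, which is the assertion; the final sentence about uniform lattices is the case $k = 1$, where $\Lambda^2 = \Lambda$.

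I do not anticipate a serious obstacle: every ingredient is already in place. The only mild subtlety is that $D^+$ is a $\limsup$, so one must be slightly careful that the passage $\sup_x \sum_{y \in F}(\cdots) \leq |F| \sup_x (\cdots)$ and the splitting of the $\limsup$ of a sum into the sum of $\limsup$s go through in the right direction — but these are all standard and cause no loss since we only need an upper bound. One should also double-check that the monotonicity $D^{\pm}$ behaves correctly under the inclusion $\Lambda^2 \subseteq \Lambda F$, i.e.\ that $D^+(\Lambda^2) \leq D^+(\Lambda F)$, which is immediate from the definition of $D^+$ as $\Lambda^2 \subseteq \Lambda F$ forces $|\Lambda^2 \cap xK_n| \leq |\Lambda F \cap xK_n|$ for every $x$ and $n$. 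With these observations the proof is a short concatenation of \Cref{lem:minimal_uniform}, \Cref{thm:uniformly_minimal}, and the Følner-sequence volume estimate.
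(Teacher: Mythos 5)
Your proposal is correct and follows essentially the same route as the paper: \Cref{lem:minimal_uniform} to upgrade minimality of $\pi(\Lambda^2)g$ to uniform minimality of $\pi(\Lambda)g$, then \Cref{thm:uniformly_minimal} to get $D^+(\Lambda)\leq d_\pi$, and finally the covering $\Lambda^2\subseteq \Lambda F$ with $|F|\leq k$ together with subadditivity of the upper Beurling density. The only difference is cosmetic: the paper simply cites subadditivity of $D^+$, whereas you rederive it via the F\o lner volume estimate from the proof of \Cref{thm:density_approx}, which is a valid (and slightly more self-contained) way to justify the same inequality.
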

\begin{proof}
If $\pi(\Lambda^2) g$ is minimal, then $\pi(\Lambda) g$ is uniformly minimal by \Cref{lem:minimal_uniform}. Hence, an application of \Cref{thm:uniformly_minimal} implies that $D^+ (\Lambda) \leq d_{\pi}$. If $F \subseteq G$ is a finite set of cardinality $|F| \leq k$ satisfying $\Lambda^2 \subseteq F \Lambda$, then the subadditivity of the upper Beurling density gives
\[
D^+ (\Lambda^2) \leq |F| D^+ (\Lambda) \leq d_{\pi} k,
\]
which is the desired result.
\end{proof}

\subsection{Frames and Riesz sequences} \label{sec:frames_riesz}
A coherent system $\pi(\Lambda) g$ is a \emph{frame} for $\Hpi$ if there exist $A, B > 0$ such that
\begin{align} \label{eq:frame}
A \| f \|_{\Hpi}^2 \leq \sum_{\lambda \in \Lambda} |\langle f, \pi(\lambda) g \rangle |^2 \leq B \| f \|_{\Hpi}^2, \quad f \in \Hpi.
\end{align}
Equivalently, the frame operator $S = \sum_{\lambda \in \Lambda} \langle \cdot , \pi(\lambda) g \rangle \pi(\lambda) g$ is bounded and invertible on $\Hpi$. If $\pi(\Lambda) g$ is a frame, then $S^{-1} \pi(\Lambda) g$ is also a frame for $\Hpi$, the so-called \emph{canonical dual frame}. Clearly, any frame for $\Hpi$ is complete in $\Hpi$.

Dual to the notion of a frame, a coherent system $\pi(\Lambda) g$ is called a \emph{Riesz sequence} in $\Hpi$ if there exist $A, B >0$ such that 
\begin{align} \label{eq:riesz}
A \| c \|_{\ell^2}^2 \leq \bigg\| \sum_{\lambda \in \Lambda} c_{\lambda} \pi(\lambda) g \bigg\|_{\Hpi}^2 \leq B \| c \|_{\ell^2}^2, \quad c \in \ell^2 (\Lambda). 
\end{align}
A Riesz sequence $\pi(\Lambda) g$ is a frame for its span, and its canonical dual frame in $\overline{\Span \pi(\Lambda) g}$ is bi-orthogonal to $\pi(\Lambda) g$. In particular, any Riesz sequence is uniformly minimal. 

A coherent system $\pi(\Lambda) g$ satisfying the upper bound in \eqref{eq:frame} (equivalently, in \eqref{eq:riesz}) is called a \emph{Bessel sequence}. In this case, the index set $\Lambda$ satisfies $\sup_{x \in G} |\Lambda \cap x K| < \infty$ for some (all) compact unit neighborhoods $K \subseteq G$ with non-empty interior. In particular, $\Lambda$ must be locally finite.

In order to obtain necessary density conditions for frames (resp. Riesz sequences) $\pi(\Lambda) g$ from \Cref{thm:HAP_density} (resp. \Cref{thm:uniformly_minimal}), it remains to show that a frame satisfies the homogeneous approximation property. For this, define, for a fixed relatively compact unit neighborhood $Q \subseteq G$, the collection
\begin{align} \label{eq:b2_vector}
\mathcal{B}^2_{\pi} := \bigg\{ g \in \Hpi : \int_G | \sup_{y \in Q}| \langle f, \pi (xy) g \rangle||^2 \; d\mu(x) < \infty, \quad \forall f \in \Hpi \bigg\}.
\end{align}
Then $\mathcal{B}^2_{\pi}$ is dense in $\Hpi$ and independent of the choice of defining neighborhood; cf. \cite{grochenig2008homogeneous}.

\begin{proposition}[\cite{grochenig2008homogeneous}] \label{prop:frame_hap}
Suppose that $g \in \mathcal{B}_{\pi}^2$ and that $\pi(\Lambda) g$ is a frame with canonical dual frame $(h_{\lambda})_{\lambda \in \Lambda}$. Then $(h_{\lambda})_{\lambda \in \Lambda}$ satisfies the homogeneous approximation property. 
\end{proposition}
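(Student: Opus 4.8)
The plan is to exploit the canonical dual-frame reconstruction and to show that the expansion coefficients indexed by points of $\Lambda$ lying outside a large neighbourhood of $x$ carry only a small amount of energy, uniformly in $x \in G$. Write $A \le B$ for frame bounds of $\pi(\Lambda)g$ and let $S$ be its frame operator, so that $h_{\lambda} = S^{-1}\pi(\lambda)g$; then $(h_{\lambda})_{\lambda \in \Lambda}$ is a frame with Bessel bound $A^{-1}$, and $f = \sum_{\lambda \in \Lambda} \langle f, \pi(\lambda)g\rangle h_{\lambda}$ with unconditional convergence for every $f \in \Hpi$. Applying this with $\pi(x)f$ in place of $f$, splitting the series at $\Lambda \cap xK$, and invoking the Bessel bound of $(h_\lambda)$ gives
\[
\dist\big( \pi(x)f,\, \spn\{ h_{\lambda} : \lambda \in \Lambda \cap xK \} \big)^{2} \;\le\; A^{-1} \sum_{\lambda \in \Lambda \setminus xK} |\langle \pi(x)f, \pi(\lambda)g\rangle|^{2},
\]
so it suffices to produce, for given $f$ and $\varepsilon > 0$, a compact unit neighbourhood $K$ with $\sum_{\lambda \in \Lambda \setminus xK} |\langle \pi(x)f, \pi(\lambda)g\rangle|^{2} < A\varepsilon^{2}$ for all $x \in G$. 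Here $\Lambda \cap xK$ is finite because a Bessel system forces $\Lambda$ to be relatively separated, hence locally finite, cf.\ \Cref{sec:frames_riesz}.

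Next I would rewrite the coefficients using the matrix coefficient $V_{g}f = \langle f, \pi(\cdot)g\rangle$: since $\pi$ is projective unitary, $\pi(x)^{-1}$ is a unimodular scalar multiple of $\pi(x^{-1})$, so $|\langle \pi(x)f, \pi(\lambda)g\rangle| = |V_{g}f(x^{-1}\lambda)|$ and the tail sum equals $\sum_{\mu \in x^{-1}\Lambda \setminus K} |V_{g}f(\mu)|^{2}$. The crucial step is a maximal-function sampling estimate turning this discrete sum into an integral. Fix a relatively compact symmetric unit neighbourhood $Q$ and set $N := \sup_{z \in G} |\Lambda \cap zQ| < \infty$, noting that every translate $x^{-1}\Lambda$ has the same constant. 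For a function $F$ on $G$ put $MF(z) := \sup_{y \in Q} |F(zy)|$; if $z \in \mu Q^{-1}$ then $\mu \in zQ$, so $|F(\mu)| \le MF(z)$, and integrating over $z \in \mu Q^{-1}$ using left-invariance of $\mu$ yields $|F(\mu)|^{2} \le \mu(Q)^{-1} \int_{\mu Q^{-1}} |MF(z)|^{2}\, d\mu(z)$. Summing over $\mu$ in a set $\Lambda'$ with $\sup_{z} |\Lambda' \cap zQ| \le N$ and using $\sum_{\mu \in \Lambda'} \mathbbm{1}_{\mu Q^{-1}}(z) = |\Lambda' \cap zQ| \le N$, one gets
\[
\sum_{\mu \in \Lambda'} |F(\mu)|^{2} \;\le\; \frac{N}{\mu(Q)} \int_{\bigcup_{\mu \in \Lambda'} \mu Q^{-1}} |MF(z)|^{2}\, d\mu(z).
\]
Taking $F = V_{g}f$ and $\Lambda' = x^{-1}\Lambda \setminus K$: if $K \supseteq K_{0}Q$ for a compact unit neighbourhood $K_{0}$, then $\mu \notin K$ forces $\mu Q^{-1} \cap K_{0} = \emptyset$, so the domain of integration lies in $G \setminus K_{0}$, and we arrive at the $x$-independent bound $\tfrac{N}{\mu(Q)} \int_{G \setminus K_{0}} |M(V_{g}f)(z)|^{2}\, d\mu(z)$.

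Finally, the hypothesis $g \in \mathcal{B}^{2}_{\pi}$ is exactly the statement that $M(V_{g}f) \in L^{2}(G)$ for every $f \in \Hpi$; since $G$ is $\sigma$-compact we may exhaust it by compact unit neighbourhoods and, by dominated convergence, choose $K_{0}$ so large that $\int_{G \setminus K_{0}} |M(V_{g}f)(z)|^{2}\, d\mu(z) < A\varepsilon^{2}\mu(Q)/N$. Then $K := \overline{K_{0}Q}$ is a compact unit neighbourhood for which $\sum_{\lambda \in \Lambda \setminus xK} |\langle \pi(x)f, \pi(\lambda)g\rangle|^{2} < A\varepsilon^{2}$ for all $x \in G$; combined with the first display this establishes the HAP for $(h_{\lambda})_{\lambda \in \Lambda}$. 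I expect the main obstacle to be this sampling inequality together with the uniform-in-$x$ bookkeeping --- one must pass from the $L^{2}$-smallness of the continuous maximal function $M(V_{g}f)$ outside a large compact set to $\ell^{2}$-smallness of the samples of $V_{g}f$ over \emph{all} translates $x^{-1}\Lambda$ at once, which is precisely where relative separation of $\Lambda$ and the definition of $\mathcal{B}^{2}_{\pi}$ enter; the dual-frame expansion and the cocycle manipulation are routine.
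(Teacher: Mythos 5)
Your argument is correct. The paper does not prove this proposition itself --- it is imported from \cite{grochenig2008homogeneous} --- and your reconstruction (canonical dual-frame expansion, tail estimate via the Bessel bound $A^{-1}$ of $(h_{\lambda})_{\lambda \in \Lambda}$, reduction to $|V_g f(x^{-1}\lambda)|$ by the cocycle identity, and the maximal-function sampling inequality using $g \in \mathcal{B}_{\pi}^2$ together with the relative separation of $\Lambda$ forced by the Bessel property) is precisely the standard proof given in that reference.
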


\begin{theorem} \label{thm:frame_riesz}
Let $\Lambda \subseteq G$ be discrete. The following assertions hold:
\begin{enumerate}[(i)]
\item If $g \in \mathcal{B}^2_{\pi}$ and $\pi(\Lambda) g$ is a frame for $\Hpi$, then $D^- (\Lambda) \geq d_{\pi}$.
\item If $g \in \Hpi$ and $\pi(\Lambda) g$ is a Riesz sequence in $\Hpi$, then $D^+ (\Lambda) \leq d_{\pi}$. 
\end{enumerate}
\end{theorem}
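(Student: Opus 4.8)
The plan is to obtain both parts as direct consequences of the density theorems already established for systems satisfying the HAP and for uniformly minimal systems, respectively, together with the structural facts relating frames to the HAP and Riesz sequences to uniform minimality.

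For part (i), I would proceed as follows. Since $\pi(\Lambda)g$ is a frame, it is in particular a Bessel sequence, so by the discussion preceding \eqref{eq:b2_vector} the index set $\Lambda$ satisfies $\sup_{x\in G}|\Lambda\cap xK|<\infty$ for some compact unit neighborhood $K$ with non-empty interior; in particular $\Lambda$ is locally finite. Next, because $g\in\mathcal{B}^2_{\pi}$, \Cref{prop:frame_hap} yields the canonical dual frame $(h_{\lambda})_{\lambda\in\Lambda}$ and guarantees that this system satisfies the homogeneous approximation property \eqref{eq:HAP}. Since \Cref{thm:HAP_density} is formulated for an arbitrary system $(g_{\lambda})_{\lambda\in\Lambda}$ of vectors indexed by a locally finite set and satisfying the HAP, applying it to $(h_{\lambda})_{\lambda\in\Lambda}$ gives $D^-(\Lambda)\geq d_{\pi}$, as claimed.

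For part (ii), I would use that any Riesz sequence is uniformly minimal: as recalled in \Cref{sec:frames_riesz}, a Riesz sequence $\pi(\Lambda)g$ is a frame for $\overline{\Span\pi(\Lambda)g}$, and its canonical dual frame in that subspace is bi-orthogonal to $\pi(\Lambda)g$ and uniformly bounded in norm (by the reciprocal of the lower Riesz bound), hence $\pi(\Lambda)g$ admits a bounded bi-orthogonal system and is uniformly minimal. \Cref{thm:uniformly_minimal} then gives $D^+(\Lambda)\leq d_{\pi}$.

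Since both statements are assembled entirely from previously proved results, there is essentially no genuine obstacle here; the only points requiring a little care are verifying the hypotheses of \Cref{thm:HAP_density} in part (i)—namely the local finiteness of $\Lambda$, which is forced by the Bessel property—and correctly invoking the HAP for the dual frame $(h_{\lambda})_{\lambda\in\Lambda}$ rather than for $\pi(\Lambda)g$ itself.
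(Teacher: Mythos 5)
Your proposal is correct and follows essentially the same route as the paper: part (i) via local finiteness from the Bessel property, \Cref{prop:frame_hap} applied to the canonical dual frame, and then \Cref{thm:HAP_density}; part (ii) via uniform minimality of Riesz sequences and \Cref{thm:uniformly_minimal}. The additional justifications you supply (the explicit bound on the bi-orthogonal system, and the remark that the HAP must be invoked for the dual frame rather than for $\pi(\Lambda)g$ itself) are accurate and consistent with the paper's argument.
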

\begin{proof}
(i) If $\pi(\Lambda) g$ is a frame, then $\Lambda$ is locally finite. Since $g \in \mathcal{B}_{\pi}^2$, the canonical dual frame $(h_{\lambda} )_{\lambda \in \Lambda}$ of $\pi(\Lambda) g$ satisfies the homogeneous approximation property by \Cref{prop:frame_hap}. Applying \Cref{thm:HAP_density} to $(h_{\lambda} )_{\lambda \in \Lambda}$ yields $D^- (\Lambda) \geq d_{\pi}$. 

(ii) If $\pi(\Lambda) g$ is a Riesz sequence, then it is automatically uniformly minimal, and hence $D^+ (\Lambda) \leq d_{\pi}$ by \Cref{thm:uniformly_minimal}. 
\end{proof}

Theorem \ref{thm:frame_riesz}  gives a new proof of the main result of \cite{enstad2022dynamical} for amenable unimodular groups. 

\section{Examples} \label{sec:example}
This section provides two examples that illustrate the applicability of the main theorems.

The first example provides a group in which \Cref{thm:main1} applies, but where the density theorems \cite{bekka2004square, romero2004complete, enstad2021density} for complete lattice orbits do not as the group does not admit a lattice.

\begin{example}[Abelian group without a lattice]
For a prime number $p$, define the additive subgroup $\Z[1/p] = \{ q /p^k : q \in \mathbb{Z}, k \in \mathbb{N}\}$ of $\Q$. Let $\Q_p$ denote the field of $p$-adic numbers, which is the completion of $\Q$ in the $p$-adic norm $| \cdot |_p$. Identifying $\Z[1/p]$ as a subset of $\Q_p \times \R$ using the map $q \mapsto (q,q)$, the set $\Z[1/p]$ forms a lattice in $\Q_p \times \R$. For this, note that the compact identity neighborhood $C = \{ (x,y) \in \Q_p \times \R : |x|_p \leq 1, |y| \leq 1/2 \}$ satisfies $C \cap \Z[1/p] = \{ (0,0) \}$ and $C + \Z[1/p] = \Q_p \times \R$. Since $\Z[1/p]$ projects injectively and densely to either factor, it follows that, for any compact symmetric neighborhood $W$ of the identity of $\R$, the corresponding model set 
\[ \Lambda = p_{\Q_p}(\Z[1/p] \cap (\Q_p \times W))
\]
is an approximate lattice in $\Q_p$; see \Cref{ex:model_set}. On the other hand, the group $\Q_p$ contains no discrete subgroups besides the trivial subgroup; in particular, it does not admit lattices.

The product group $G = \Q_p^2$ similarly contains approximate lattices, but no lattices. Since $\Q_p^2 \cong \Q_p \times \widehat{\Q_p}$, where $\widehat{\Q_p}$ denotes the Pontryagin dual of $\Q_p$, the projective representation $\pi$ of $G$ on $L^2(\Q_p)$ given by 
\[ \pi(x,\omega)f(t) = \omega(t)f(t-x), \qquad (x,\omega) \in \Q_p \times \widehat{\Q_p}, \, f \in L^ 2(\Q_p), \]
gives an example of a projective discrete series of $G$; see \cite{grochenig1998aspects}. 
\end{example}

The second example provides a group where \Cref{thm:frame_riesz} is applicable, but not the density conditions of \cite{fuehr2017density, mitkovski2020density} as the group has exponential growth. The example is taken from \cite{rosenberg1978square}.

\begin{example}[Solvable Lie group with exponential growth]\label{ex:exponential}
Let $\mathfrak{s} = \Rspan \{X_1, ..., X_5\}$ be the Lie algebra with
\[
[X_1, X_2] = X_3, \quad [X_5, X_1] = X_1, \quad [X_5, X_2] = -X_2, \quad [X_5, X_4] = X_3.
\]
Then $\mathfrak{s}$ is completely solvable, but not nilpotent. Its center is $\mathfrak{z} = \mathbb{R} X_3$. The simply connected Lie group $S$ with Lie algebra $\mathfrak{s}$ is unimodular and admits irreducible, square-integrable representations modulo $Z = \exp(\mathfrak{z})$. 
See, e.g., \cite[Section 4.13]{rosenberg1978square} for details.

The quotient group $G := S/Z$ admits the projective discrete series $\pi := \rho \circ s$, where $\rho$ is any irreducible, square-integrable representation (modulo $Z$) of $S$ and $s : G \to S$ a Borel cross-section. Since $G$ is an exponential solvable Lie group, its Lie algebra $\mathfrak{g}$ has the property that, for every $Y \in \mathfrak{g}$, the adjoint representation $\ad(Y)$ on  $\mathfrak{g}$ has no non-zero purely imaginary eigenvalues. As $\mathfrak{g}$ is non-nilpotent, there exists $Y \in \mathfrak{g}$ such that $\ad(Y)$ has a non-zero non-purely imaginary eigenvalue, and thus $G$ must have exponential growth by \cite[Theorem 1.4]{jenkins1973growth}.
\end{example}

\section*{Acknowledgements}
U.E.\ gratefully acknowledges support from the The Research Council of Norway through project 314048. J.v.V. gratefully acknowledges support from 
the Austrian Science Fund (FWF) project J-4555. The authors thank the referee for helpful suggestions and for pointing out an inaccuracy in an example in an earlier version of the paper. 

\bibliographystyle{abbrv}
\bibliography{bib}

\end{document}